\newtheorem{theorem}{Theorem}[section]
\newtheorem{definition}{Definition}[section]
\newtheorem{lemma}[theorem]{Lemma}
\newtheorem{example}[theorem]{Example}
\newtheorem{remark}{Remark}[section]
\title{Some New Balanced and Almost Balanced Quaternary Sequences with Low Autocorrelation}
\author{
Jerod Michel\thanks{Corresponding author. Email Address: michelj@sustc.edu.cn.}
\thanks{J. Michel is with the Department of Computer Science and Engineering, Southern University of Science and Technology, Shenzhen 518055, China.},
Qi Wang\thanks{Q. Wang is with the Department of Computer Science and Engineering, Southern University of Science and Technology, Shenzhen 518055, China.
\newline
The authors were supported in part by the Ministry of Science and Technology (MOST) of China under the Grant No. 2017YFC0804002, the Shenzhen fundamental research programs under Grant No. JCYJ20150630145302234, and the National Natural Science Foundation of China under Grant No. 11601220 and Grant No. 61672015.
} \\
}
\begin{document}
\date{}\maketitle

\begin{abstract}
Quaternary sequences of both even and odd period having low autocorrelation are studied. We construct new families of balanced quaternary sequences of odd period and low autocorrelation using cyclotomic classes of order eight, as well as investigate the linear complexity of some known quaternary sequences of odd period. We discuss a construction given by Chung et al. in ``New Quaternary Sequences with Even Period and Three-Valued Autocorrelation'' [IEICE Trans. Fundamentals Vol. E93-A, No. 1 (2010)] first by pointing out a slight modification (thereby obtaining new families of balanced and almost balanced quaternary sequences of even period and low autocorrelation), then by showing that, in certain cases, this slight modification greatly simplifies the construction given by Shen et al. in ``New Families of Balanced Quaternary Sequences of Even Period with Three-level Optimal Autocorrelation'' [IEEE Comm. Letters DOI10.1109/LCOMM.2017.26611750 (2017)]. We investigate the linear complexity of these sequences as well.

\medskip
\noindent {{\it Key words and phrases\/}:
Periodic sequence, quaternary sequence, periodic autocorrelation, linear complexity.
}
\end{abstract}


\section{Introduction}\label{sec1}
The periodic autocorrelation, balancedness, and linear complexity are all measures of interest when designing sequences. It is desirable to have small autocorrelation, large linear complexity, and balancedness for applications in certain communications systems, cryptography and digital systems (see \cite{CDR}, \cite{FD}, \cite{GOL}, \cite{MICH} and \cite{SOSL}).
\par
Binary sequences with low autocorrelation are important building blocks for constructing quaternary sequences with low autocorrelation (\cite{CAI}, \cite{GOL}, \cite{SU}, \cite{TD}). The use of cyclotomic classes and generalized cyclotomic classes to construct binary sequences is a well-known and extensively used method \cite{CUN}, \cite{DHM}. Sidelnikov, in \cite{SID}, (and later, Lempel, Cohn and Eastman in \cite{LEM}) used quadratic residues modulo some prime power $q$ to construct binary sequences of period $q-1$ with optimal autocorrelation. No et al., in \cite{NO1}, found that, by altering these sequences by a single bit yields even more binary sequences of the same period with optimal autocorrelation. In \cite{ACH}, Arasu et al. constructed many classes of binary sequences with optimal autocorrelation of period $4l$ by interleaving four binary sequences of period $l$ having ideal autocorrelation. This work was further generalized in \cite{TD} by Tang and Ding.
\par
\par
There has been some recent progress in designing quaternary sequences as well. There seems to be much less in the literature on quaternary sequences of odd period with low autocorrelation. Quaternary sequences of odd prime period were constructed by Sidelnikov in \cite{SID}, by Green and Green in \cite{GREEN1}, by Tang and Lindner in \cite{TANG1}, and by Yang and Ke in \cite{YANG1}. Quaternary sequences of odd composite ($pq$ where $q-p=2$ or $4$) period were constructed by Green and Green in \cite{GREEN}, and by Han and Yang in \cite{HAN} and \cite{YANG}. Recent work on quaternary sequences with even period and low autocorrelation is more abundant. In \cite{TD}, Tang and Ding constructed several families of balanced and almost balanced quaternary sequences of period $4l$ where $l\equiv 3 \ ($mod $4)$. In \cite{KIM}, Kim et al. constructed families of balanced quaternary sequences of even period with optimal autocorrelation, in \cite{KIM1}, the same authors constructed quaternary sequences with ideal autocorrelation from Legendre sequences, and in \cite{JANG}, the same authors constructed new quaternary sequences with ideal autocorrelation from binary sequences with ideal autocorrelation. Autocorrelation of certain quaternary cyclotomic sequences of period $2p$ was studied in \cite{KIM2} by Kim, Hong and Song, and in \cite{SU}, Su et al. constructed new quaternary sequences of even length with optimal autocorrelation by interleaving certain combinations of binary sequences. In \cite{EDEM}, Edemskiy and Ivanov constructed balanced quaternary sequences of period $2p$ with low autocorrelation, and this construction was further generalized in \cite{SHEN} by Shen et al.
\par
Some recent progress in linear complexity of quaternary sequences over finite fields includes Edemskiy and Ivanov's work on quaternary sequences of period $pq$ in \cite{EDEM4}, and the work of Kim et al. on quaternary sequences constructed from binary Legendre sequences in \cite{KIM3}. A few works concerning linear complexity of binary and quaternary sequences are especially relevant to this correspondence. One of these is by Edemskiy \cite{EDEM1}, in which the linear complexity over $\mathbb{F}_{2}$ of order-four and order-six cyclotomic sequences is investigated. In \cite{EDEM}, Edemskiy and Ivanov were able to compute the linear complexity over $\mathbb{F}_{2^{2}}$ (as well as over $\mathbb{Z}_{4}$) of their above mentioned quaternary sequences. In \cite{WANG}, Wang and Du computed the linear complexity of the binary sequences of order $4l$ constructed in \cite{ACH} by Arasu et al.
\par
This paper focuses on the construction of quaternary sequences with low autocorrelation as well as computing the linear complexity of certain known sequences. We first present a construction that uses cyclotomic classes of order eight to obtain balanced quaternary sequences of odd prime period and low autocorrelation, and investigate the linear complexity over $\mathbb{F}_{2^{2}}$ of some known quaternary sequences of odd period. We also point out a slight modification of a construction given by Chung, Han and Yang in \cite{CHUNG} which applies the inverse Gray-mapping to certain pairs of binary sequences, and then we show that, in certain cases, this slight modification greatly simplifies the construction given by Shen et al. in \cite{SHEN}. Furthermore, we compute the linear complexity over $\mathbb{F}_{2^{2}}$ of these sequences.
\par
The remainder of this paper is organized as follows. In Section \ref{sec2} we introduce some necessary preliminary concepts. In Section \ref{sec6} we construct new balanced quaternary sequences of odd period with low autocorrelation using cyclotomic classes of order eight. We also investigate the linear complexity over $\mathbb{F}_{2^{2}}$ of some known quaternary sequences of odd period. In Section \ref{sec3} we discuss a construction given by Chung et al. in \cite{CHUNG}, and slightly modify it, thereby giving several new families of quaternary sequences with even period and low autocorrelation, as well as investigate their complexity over $\mathbb{F}_{2^{2}}$. Section \ref{sec5} concludes the paper.

\section{Preliminaries}\label{sec2}
Let $\mathbf{s}=(s(t),t=0,1,...,N-1)$ be a sequence of period $N$ over the integer ring $\mathbb{Z}_{m}=\{0,1,...,m-1\}$.  For each $k\in\mathbb{Z}_{m}$, we define $N_{k}(\mathbf{s})=|\{0\leq t<N\mid s(t)=k\}|$. The sequence $\mathbf{s}$ is called {\it balanced} if $max_{k\in\mathbb{Z}_{m}}N_{k}(\mathbf{s})-min_{k\in\mathbb{Z}_{m}}N_{k}(\mathbf{s})\leq 1$. If $\mathbf{s}$ is not balanced, then we say $\mathbf{s}$ is {\it almost balanced} if $max_{k\in\mathbb{Z}_{m}}N_{k}(\mathbf{s})-min_{k\in\mathbb{Z}_{m}}N_{k}(\mathbf{s})\leq2$. For each binary sequence $\mathbf{s}$ of period $N$, the set of all $t\in\mathbb{Z}_{N}$ where $s(t)=1$ is called the {\it support} (or {\it defining set}) of $\mathbf{s}$, and is denoted by $D_{\mathbf{s}}$. Similarly, for each subset $D$ of $\mathbb{Z}_{N}$, the binary sequence of period $N$ whose support is precisely the set $D$ is called the {\it characteristic} sequence of $D$, and is denoted by $\mathbf{s}_{D}$.
\par
Given two sequences $\mathbf{s}_{1}$ and $\mathbf{s}_{2}$ of period $N$ over $\mathbb{Z}_{m}$, the {\it periodic correlation} $R_{\mathbf{s}_{1},\mathbf{s}_{2}}(\tau)$ between $\mathbf{s}_{1}$ and $\mathbf{s}_{2}$ at integer shift $\tau$ for $0\leq\tau<N$ is defined by\[
R_{\mathbf{s}_{1},\mathbf{s}_{2}}(\tau)=\sum_{t=0}^{N-1}\omega^{s_{1}(t)-s_{2}(t+\tau)}\]
where $\omega=e^{\frac{2\pi\sqrt{-1}}{m}}$ is a complex primitive $m$-th root of unity and the addition $t+\tau$ is performed modulo $N$. For $\tau\in \mathbb{Z}_{N}$ fixed, define the {\it left cyclic shift operator} $L^{\tau}(\mathbf{s}_{2})=(s_{2}(t+\tau),t=0,1,...,N-1)$. If $\mathbf{s}_{1}=L^{\tau}(\mathbf{s}_{2})$ for some $\tau\in \mathbb{Z}_{N}$, then $R_{\mathbf{s}_{1},\mathbf{s}_{2}}$ is called the {\it autocorrelation} of the sequence $\mathbf{s}_{1}$ and is denoted by $R_{\mathbf{s}_{1}}$. The values $R_{\mathbf{s}_{1}}(\tau)$, for $0<\tau<N$, are called out-of-phase autocorrelation values. Also define $R_{max}(\mathbf{s}_{1})=max_{0<\tau<N}|R_{\mathbf{s}_{1}}(\tau)|$. We denote the complement of a binary sequence $\mathbf{s}$ by $\overline{\mathbf{s}}$, and that of a set $D$ by $\overline{D}$. Then it is clear that $\mathbf{s}_{\overline{D}}=\overline{\mathbf{s}}_{D}$.
\par
\subsection{Correlation}\label{ssec2.1}
For balanced and almost balanced binary sequences of period $N$, the optimal autocorrelation values can be classified into four categories depending on the value $N$ modulo $4$ \cite[p.~143]{CDR}:
\begin{itemize}
\item $R(\tau)\in \{0,-4\}$ if $N\equiv 0$ $($mod $4)$;
\item $R(\tau)\in \{1,-3\}$ if $N\equiv 1$ $($mod $4)$;
\item $R(\tau)\in \{-2,2\}$ if $N\equiv 2$ $($mod $4)$;
\item $R(\tau)\in \{-1\}$ if $N\equiv 3$ $($mod $4)$.
\end{itemize}
In particular, when $N\equiv 3 \ (\text{mod} \ 4)$, binary sequences with out-of-phase autocorelation $\{-1\}$ are said to have {\it ideal autocorrelation}. Balanced quaternary sequences with optimal autocorrelation property are defined in the following \cite{TD}.
\begin{definition}\label{de1} When $N\equiv 0 \ (\text{mod} \ 2)$, a balanced quaternary sequence $\mathbf{u}$ of period $N$ is said to have {\it optimal autocorrelation magnitude} if $|R_{\mathbf{u}}(\tau)|\leq2$ for all $0\leq\tau<N$.
 \end{definition}
Note that quaternary sequences of odd period $N\equiv 1 \ ({\rm mod} \ 8)$ have been studied in \cite{GREEN}, \cite{HAN}, \cite{YANG} and \cite{YANG1}, where the best out-of-phase autocorrelation magnitude is $3$.
In this paper, we will construct new families of quaternary sequences of both even and odd period having low autocorrelation (some of which have optimal autocorrelation magnitude by Definition \ref{de1}) from pairs of binary sequences with even period and optimal autocorrelation.

\subsection{Gray-mapping}

The mapping $\phi:\mathbb{Z}_{4}\rightarrow\mathbb{Z}_{2}\times \mathbb{Z}_{2}$, commonly referred to as the {\it Gray mapping}, is given by $\phi(0)=(0,0),\phi(1)=(0,1),\phi(2)=(1,1),\phi(3)=(1,0)$. By using the inverse $\phi^{-1}$ of the Gray mapping, every quaternary sequence $\mathbf{u}=(u(t),t=0,1,...,N-1)$ can be obtained from two binary sequences $\mathbf{s}_{1}=(s_{1}(t),t=0,1,...,N-1)$ and $\mathbf{s}_{2}=(s_{2}(t),t=0,1,...,N-1)$ as follows:\[
u(t)=\phi^{-1}(s_{1}(t),s_{2}(t)), \ 0\leq t<N.\]
Krone and Sarwate gave the autocorrelation of $\mathbf{u}$ in terms of the correlations between $\mathbf{s}_{1}$ and $\mathbf{s}_{2}$.
\begin{lemma}\label{le1} {\rm \cite{TD}} The autocorrelation function of $\mathbf{u}$ is given by \[
R_{\mathbf{u}}(\tau)=\frac{1}{2}\left[R_{\mathbf{s}_{1}}(\tau)+R_{\mathbf{s}_{2}}(\tau)\right]+\frac{\omega}{2}
\left[R_{\mathbf{s}_{1},\mathbf{s}_{2}}(\tau)-R_{\mathbf{s}_{2},\mathbf{s}_{1}}(\tau)\right].\]
\end{lemma}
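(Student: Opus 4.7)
The plan is to expand the defining sum $R_{\mathbf{u}}(\tau)=\sum_{t=0}^{N-1}\omega^{u(t)-u(t+\tau)}$ (with $\omega=e^{\pi\sqrt{-1}/2}$) by re-expressing each factor $\omega^{u(t)}$ in terms of the two binary components $s_{1}(t)$ and $s_{2}(t)$. The algebraic pivot I would use is the identity
\[
\omega^{\phi^{-1}(a,b)} \;=\; \frac{1+\omega}{2}(-1)^{a} + \frac{1-\omega}{2}(-1)^{b}, \qquad a,b\in\{0,1\},
\]
which I would verify by plugging in the four pairs and comparing against $\omega^{0}=1$, $\omega^{1}=\omega$, $\omega^{2}=-1$, $\omega^{3}=-\omega$ in the order dictated by $\phi(0)=(0,0)$, $\phi(1)=(0,1)$, $\phi(2)=(1,1)$, $\phi(3)=(1,0)$.

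Next, because $\omega^{-k}=\overline{\omega^{k}}$, the same identity (with $\omega$ replaced by $\overline{\omega}$) yields
\[
\omega^{-u(t+\tau)} \;=\; \frac{1-\omega}{2}(-1)^{s_{1}(t+\tau)} + \frac{1+\omega}{2}(-1)^{s_{2}(t+\tau)}.
\]
Multiplying the two expressions for $\omega^{u(t)}$ and $\omega^{-u(t+\tau)}$ produces four terms whose scalar coefficients $(1\pm\omega)(1\pm\omega)/4$ collapse, via $(1+\omega)(1-\omega)=2$, $(1+\omega)^{2}=2\omega$ and $(1-\omega)^{2}=-2\omega$, to $\tfrac{1}{2}$, $\tfrac{\omega}{2}$, $-\tfrac{\omega}{2}$, $\tfrac{1}{2}$. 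Grouping the four summands gives the pointwise identity
\[
\omega^{u(t)-u(t+\tau)} = \tfrac{1}{2}\bigl[(-1)^{s_{1}(t)+s_{1}(t+\tau)}+(-1)^{s_{2}(t)+s_{2}(t+\tau)}\bigr] + \tfrac{\omega}{2}\bigl[(-1)^{s_{1}(t)+s_{2}(t+\tau)}-(-1)^{s_{2}(t)+s_{1}(t+\tau)}\bigr].
\]

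Finally I would sum this equality over $t=0,1,\dots,N-1$ and recognise that $\sum_{t}(-1)^{s_{i}(t)+s_{j}(t+\tau)}=R_{\mathbf{s}_{i},\mathbf{s}_{j}}(\tau)$ (noting $(-1)^{s_{i}(t)-s_{j}(t+\tau)}=(-1)^{s_{i}(t)+s_{j}(t+\tau)}$ in the binary case) and that $R_{\mathbf{s}_{i},\mathbf{s}_{i}}=R_{\mathbf{s}_{i}}$; this immediately yields the claimed formula. The proof is essentially bookkeeping once the key identity is in hand. The only conceptual obstacle, and the reason a naive attempt might fail, is that one should not try to manipulate $u(t)-u(t+\tau)\pmod 4$ directly: the Gray map $\phi^{-1}$ is not a group homomorphism, so the quaternary difference has no clean description in terms of $s_{1},s_{2}$. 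Linearising $\omega^{u(t)}$ over $\mathbb{C}$ as a combination of $(-1)^{s_{1}(t)}$ and $(-1)^{s_{2}(t)}$ side-steps this issue entirely.
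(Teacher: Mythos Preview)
Your argument is correct. The key identity $\omega^{\phi^{-1}(a,b)}=\tfrac{1+\omega}{2}(-1)^{a}+\tfrac{1-\omega}{2}(-1)^{b}$ is easily checked on the four pairs, the conjugation step is valid since $\overline{\omega}=-\omega$, and the expansion of the product using $(1+\omega)(1-\omega)=2$, $(1+\omega)^{2}=2\omega$, $(1-\omega)^{2}=-2\omega$ is accurate; summing over $t$ then gives the stated formula.

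Note, however, that the paper does not supply its own proof of this lemma: it is quoted as a known result (attributed to Krone and Sarwate and cited from \cite{TD}) and stated without proof. So there is no in-paper argument to compare yours against. Your derivation is the standard one underlying the cited result, and your closing remark about why one should linearise $\omega^{u(t)}$ rather than attempt to simplify $u(t)-u(t+\tau)$ modulo $4$ is a useful observation.
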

\subsection{Cyclotomic Classes and Cyclotomic Numbers}
Let $q=ef+1$ be a prime power, and $\gamma$ a primitive element of the finite field $\mathbb{F}_{q}$ with $q$ elements. The {\it cyclotomic classes} of order $e$ are given by $D_{i}^{(e,q)}=\gamma^{i}\langle \gamma^{e} \rangle$ for $i=0,1,...,e-1$. Define the {\it cyclotomic numbers of order $e$} by $(i,j)_{e}=|D_{i}^{(e,q)}\cap (D_{j}^{(e,q)}+1)|$. It is easy to see that there are at most $e^{2}$ different cyclotomic numbers of order $e$. When it is clear from the context, we simply denote $(i,j)_{e}$ by $(i,j)$.
The cyclotomic numbers $(h,k)$ of order $e$ have the following properties \cite{D}:

\begin{eqnarray}\label{eq7}
(h,k) & = & (e-h,k-h), \\
(h,k) & = & \begin{cases}
(k,h),                         & \text{if } f \text{ even},\\
(k+\frac{e}{2},h+\frac{e}{2}), & \text{if } f \text{ odd}.
\end{cases}
\end{eqnarray}
\subsection{Linear Complexity of Shift Register Sequences}\label{ssec2.2}
Let $q$ be a prime power. Let $\mathbf{s}=(s(t),t=0,1,...,N-1)$ be a sequence over $\mathbb{F}_{q}$ of period $N$. Define the {\it sequence polynomial} of the sequence $\mathbf{s}$ as \[
\mathbf{s}(x)=s(0)+s(1)x+\cdots+s(N-1)x^{N-1}\in\mathbb{F}_{q}\left[x\right].\]It is known \cite[p.~273]{SOSL} that the {\it minimal polynomial} $P_{\mathbf{s}}(x)$ of the sequence $\mathbf{s}$ is given by \begin{equation}\label{eqn4}
P_{\mathbf{s}}(x)=(x^{N}-1)/\gcd(x^{N}-1,\mathbf{s}(x)),\end{equation}
and that the linear complexity $L(\mathbf{s})$ of the sequence $\mathbf{s}$ is the degree of the minimal polynomial $P_{\mathbf{s}}(x)$, i.e., \begin{equation}\label{eqn5}
L(\mathbf{s})=N-\text{deg}(\gcd(x^{N}-1,\mathbf{s}(x))).\end{equation}

\section{Balanced Quaternary Sequences of Odd Period with Low Autocorrelation}\label{sec6}

\begin{table}\label{ta2}
\begin{center}
\tabcolsep=0.1cm
\small
\begin{tabular}{|c|c|c|c|c|c|c|c|c|}
\hline
  $(j,i)$& 0&1&2&3&4&5&6&7 \\
  \hline
0&$(0,0)$&$(0,1)$&$(0,2)$&$(0,3)$&$(0,4)$&$(0,5)$&$(0,6)$&$(0,7)$\\\hline
1&$(0,1)$&$(0,7)$&$(1,2)$&$(1,3)$&$(1,4)$&$(1,5)$&$(1,6)$&$(1,2)$\\\hline
2&$(0,2)$&$(1,2)$&$(0,6)$&$(1,6)$&$(2,4)$&$(2,5)$&$(2,4)$&$(1,3)$\\\hline
3&$(0,3)$&$(1,3)$&$(1,6)$&$(0,5)$&$(1,5)$&$(2,5)$&$(2,5)$&$(1,4)$\\\hline
4&$(0,4)$&$(1,4)$&$(2,4)$&$(1,5)$&$(0,4)$&$(1,4)$&$(2,4)$&$(1,5)$\\\hline
5&$(0,5)$&$(1,5)$&$(2,5)$&$(2,5)$&$(1,4)$&$(0,3)$&$(1,3)$&$(1,6)$\\\hline
6&$(0,6)$&$(1,6)$&$(2,4)$&$(2,5)$&$(2,4)$&$(1,3)$&$(0,2)$&$(1,2)$\\\hline
7&$(0,7)$&$(1,2)$&$(1,3)$&$(1,4)$&$(1,5)$&$(1,6)$&$(1,2)$&$(0,1)$\\\hline
\multicolumn{8}{l}{{\scriptsize }}\\
\end{tabular}
\caption{{\scriptsize The relations of cyclotomic numbers of order 8 modulo a prime $p$ for the case where $p\equiv 1 \ ({\rm mod} \ 16)$.}}
\end{center}
\end{table}

\begin{table}\label{ta3}
\begin{center}
\tabcolsep=0.1cm
\small
\begin{tabular}{cl|cl}

\hline
  \hline
$64(0,0)$&$p-23+6x$&$64(1,2)$&$p+1-6x+4a$\\\hline
$64(0,1)$&$p-7+2x+4a$&$64(1,3)$&$p+1+2x-4a-16b$\\\hline
$64(0,2)$&$p-7-2x-8a-16y$&$64(1,4)$&$p+1+2x-4a+16y$\\\hline
$64(0,3)$&$p-7+2x+4a$&$64(1,5)$&$p+1+2x-4a-16y$\\\hline
$64(0,4)$&$p-7-10x$&$64(1,6)$&$p+1+2x-4a+16b$\\\hline
$64(0,5)$&$p-7+2x+4a$&$64(2,4)$&$p+1+6x+8a$\\\hline
$64(0,6)$&$p-7-2x-8a+16y$&$64(2,5)$&$p+1-6x+4a$\\\hline
$64(0,7)$&$p-7+2x+4a$&\multicolumn{2}{l}{{\scriptsize }}\\\hline
\multicolumn{4}{l}{{\scriptsize }}\\

\end{tabular}
\caption{{\scriptsize The cyclotomic numbers of order 8 modulo a prime $p$ for the case where $p\equiv 1 \ ({\rm mod} \ 16)$.}}
\end{center}
\end{table}
\subsection{New Balanced Quaternary Sequences of Odd Period with Low Autocorrelation}

In this section we construct new quaternary sequences of odd period and low autocorrelation using cyclotomic classes of order eight. For convenience, we will denote the cyclotomic classes $D_{i}^{(8,p)}$ of order eight modulo a prime $p$, simply by $D_{i}$.

\begin{theorem}\label{th68} Let $p=x^{2}+16=a^{2}+2b^{2}\equiv 1 \ ({\rm mod} \ 16) \ (x\equiv a\equiv 1 \ ({\rm mod} \ 4))$ be a prime such that $x-a=4$. Define $C_{0}=D_{2}\cup D_{6},C_{1}=D_{1}\cup D_{3},C_{2}=D_{0}\cup D_{4}$ and $C_{3}=D_{5}\cup D_{7}$, and let $\mathbf{u}$ be the quaternary sequence of period $p$ defined by
\[u(t)=\begin{cases} 0, & \text{ if } t \in C_{0}\cup\{0\},\\
                     1, & \text{ if } t \in C_{1},\\
                     2, & \text{ if } t \in C_{2},\\
                     3, & \text{ if } t \in C_{3}.\\\end{cases}\]Then \[N_{j}(\mathbf{u})=\begin{cases} \frac{p+3}{4},&\text{ if } j=0,\\
                                                                          \frac{p-1}{4},&\text{ otherwise,}\end{cases}\text{ and } \
    R_{\mathbf{u}}(\tau)=\begin{cases} p,& \text{once,}\\
                                          -1,&\frac{p-1}{8}\text{ times,}\\
                                          -3,&\frac{p-1}{2}\text{ times,}\\
                                           3,&\frac{3(p-1)}{8}\text{ times.}\end{cases}\]

\end{theorem}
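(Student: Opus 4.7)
The plan is to split the two claims. The balance count is a free consequence of the construction: each cyclotomic class $D_i$ has $(p-1)/8$ elements, each $C_j$ is a union of two such classes, and $\{0\}$ is appended only to $C_0$, so $N_0(\mathbf u) = (p-1)/4 + 1 = (p+3)/4$ and $N_j(\mathbf u) = (p-1)/4$ for $j\neq 0$. For the autocorrelation I would work with the two binary companion sequences obtained by inverting the Gray mapping. Reading off $\phi^{-1}$ shows that $u(t) = \phi^{-1}(s_1(t), s_2(t))$ forces
\[
D_{\mathbf s_1} = C_2 \cup C_3 = D_0 \cup D_4 \cup D_5 \cup D_7, \qquad D_{\mathbf s_2} = C_1 \cup C_2 = D_0 \cup D_1 \cup D_3 \cup D_4,
\]
both of cardinality $(p-1)/2$ and with $0 \notin D_{\mathbf s_i}$, consistent with $u(0)=0$.

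Lemma~\ref{le1} with $\omega = \sqrt{-1}$ then gives
\[
R_{\mathbf u}(\tau) = \tfrac{1}{2}\bigl(R_{\mathbf s_1}(\tau) + R_{\mathbf s_2}(\tau)\bigr) + \tfrac{\sqrt{-1}}{2}\bigl(R_{\mathbf s_1,\mathbf s_2}(\tau) - R_{\mathbf s_2,\mathbf s_1}(\tau)\bigr).
\]
Because $p \equiv 1 \pmod{16}$ we have $-1 \in D_0$, so both $D_{\mathbf s_1}$ and $D_{\mathbf s_2}$ are invariant under $t \mapsto -t$; a direct change of variable then shows $R_{\mathbf s_1,\mathbf s_2}(\tau) = R_{\mathbf s_2,\mathbf s_1}(\tau)$, the imaginary part vanishes, and $R_{\mathbf u}(\tau) = \tfrac{1}{2}\bigl(R_{\mathbf s_1}(\tau) + R_{\mathbf s_2}(\tau)\bigr)$. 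Writing each binary autocorrelation as $R_{\mathbf s_D}(\tau) = p - 4|D| + 4|D \cap (D-\tau)|$ and using the fact that $|D_i \cap (D_j - \tau)|$ depends on $\tau$ only through the cyclotomic class $D_k$ containing it (via rescaling by $\tau^{-1}$, which converts the count into one of the order-$8$ cyclotomic numbers), each $R_{\mathbf s_i}(\tau)$ reduces to an explicit sum of sixteen cyclotomic numbers indexed by $k \in \{0,1,\dots,7\}$.

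The technical heart of the argument is then evaluating these eight sums using the data in Tables~\ref{ta2} and~\ref{ta3}. The symmetry identities in (\ref{eq7}) collapse many entries, and substituting the closed-form expressions from Table~\ref{ta3} turns each sum into an integer linear combination of $p, x, a, b, y$; the crucial hypothesis $x-a=4$ is precisely what forces the dependence on $x, a, b, y$ to cancel so that $R_{\mathbf u}(\tau)$ takes only the three values $-1, -3, 3$. Tallying which of these values appears for which $k$, and multiplying by $|D_k| = (p-1)/8$, produces the claimed distribution, with the trivial shift contributing $R_{\mathbf u}(0)=p$. The main obstacle is purely bookkeeping: tracking sixteen cyclotomic numbers across eight residue classes for each of $\mathbf s_1$ and $\mathbf s_2$ is tedious and error-prone, but the tables reduce it to mechanical arithmetic, and the rigidity imposed by $x-a=4$ is what guarantees the out-of-phase autocorrelation takes exactly three integer values with the stated multiplicities $(p-1)/8$, $(p-1)/2$, and $3(p-1)/8$.
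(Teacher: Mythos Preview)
Your proposal is correct, and the computational core---reducing everything to order-eight cyclotomic numbers via Tables~II and~III and then invoking $x-a=4$---is exactly what the paper does. The organizational framing differs, however, and it is worth noting what each buys.

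The paper does \emph{not} pass through the Gray decomposition here. Instead it computes $R_{\mathbf u}(\tau)=\sum_t i^{u(t)-u(t+\tau)}$ directly: fixing $\tau^{-1}\in D_h$, it writes the real part as a signed sum of twenty intersection counts $|C_{m}\cap(C_{n}+1)|$ (plus the boundary terms $\zeta^{\cdots}_{\pm1}$ coming from $t=0$), expands each of these in cyclotomic numbers, and tabulates the result for $h=0,\dots,7$. For the imaginary part it writes down the analogous expansion and observes that the cyclotomic-number terms cancel in pairs by the symmetry of Table~II, while the $\zeta$-terms vanish because $1,-1\in D_0$.

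Your route through Lemma~\ref{le1} repackages the same arithmetic as the average of two binary autocorrelations, which is equally valid but involves a comparable number of cyclotomic-number evaluations (sixteen per $R_{\mathbf s_i}$ per residue class, versus the paper's twenty in one pass). The real payoff of your framing is the imaginary part: your observation that $-1\in D_0$ forces $D_{\mathbf s_1}$ and $D_{\mathbf s_2}$ to be symmetric under negation, and hence $R_{\mathbf s_1,\mathbf s_2}(\tau)=R_{\mathbf s_2,\mathbf s_1}(\tau)$ by the substitution $t\mapsto -t-\tau$, is a clean one-line conceptual argument that replaces the paper's term-by-term cancellation. Both approaches are sound; yours is slightly more conceptual on the imaginary side, the paper's slightly more compact on the real side.
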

\begin{proof} Assume that $\tau^{-1}\in D_{h}$, for $h=0,...,7$, and let $\zeta_{z}^{h}=1$ if $z\in D_{h}$ and $\zeta_{z}^{h}=0$ otherwise. The real part of the autocorrelation is given by \begin{eqnarray*} Re(R_{\mathbf{u}}(\tau))
& = & \mid C_{h}\cap (C_{h}+1) \mid  + \mid C_{1+h}\cap (C_{1+h}+1) \mid + \mid C_{2+h}\cap (C_{2+h}+1) \mid\\
&   & + \mid C_{3+h}\cap (C_{3+h}+1) \mid-\mid C_{h}\cap (C_{2+h}+1) \mid  - \mid C_{2+h}\cap (C_{h}+1) \mid \\
&   & - \mid C_{1+h}\cap (C_{3+h}+1) \mid- \mid C_{3+h}\cap (C_{1+h}+1) \mid\\
&   & +\zeta_{1}^{6+h}+\zeta_{-1}^{6+h}+\zeta_{1}^{2+h}+\zeta_{-1}^{2+h}-\zeta_{-1}^{4+h}-\zeta_{-1}^{h}-\zeta_{1}^{4+h}-\zeta_{1}^{h}\\
& = & \sum_{j=0}^{7}(j+h,j+h)+2\left[(6+h,2+h)+(1+h,3+h)+(4+h,h)+(5+h,7+h)\right]\\
&   & -2[(6+h,4+h)+(6+h,h)+(2+h,4+h)+(2+h,h)\\
&   & +(1+h,5+h)+(1+h,7+h)+(3+h,5+h)+(3+h,7+h)]\\
&   & +\zeta_{1}^{6+h}+\zeta_{-1}^{6+h}+\zeta_{1}^{2+h}+\zeta_{-1}^{2+h}-\zeta_{-1}^{4+h}-\zeta_{-1}^{h}-\zeta_{1}^{4+h}-\zeta_{1}^{h}.\\\end{eqnarray*} Using Tables II and III (which can be found in \cite{CDR}) we are able to compute the values $Re(R_{\mathbf{u}}(\tau))$ as $h$ runs over $\{0,...,7\}$. We list these values in tabular form below.

\begin{center}
\begin{tabular}{c||cccccccc}
$h$&0&1&2&3&4&5&6&7\\\hline
$Re(R_{\mathbf{u}}(\tau))$&$-3$&$\frac{x-a+2}{2}$&$a-x+1$&$\frac{x-a+2}{2}$&$-3$&$\frac{x-a+2}{2}$&$a-x+1$&$\frac{x-a-4}{2}$\\
\end{tabular}
\end{center}
The imaginary part of the autocorrelation is given by
\begin{eqnarray*} Im(R_{\mathbf{u}}(\tau))
& = & \mid C_{1+h}\cap (C_{h}+1) \mid  + \mid C_{2+h}\cap (C_{1+h}+1) \mid + \mid C_{3+h}\cap (C_{2+h}+1) \mid\\
&   & + \mid C_{h}\cap (C_{3+h}+1) \mid-\mid C_{h}\cap (C_{1+h}+1) \mid  - \mid C_{1+h}\cap (C_{2+h}+1) \mid \\
&   & - \mid C_{2+h}\cap (C_{3+h}+1) \mid- \mid C_{3+h}\cap (C_{h}+1) \mid\\
&   & +\zeta_{1}^{1+h}+\zeta_{1}^{3+h}+\zeta_{-1}^{5+h}+\zeta_{-1}^{7+h}-\zeta_{-1}^{1+h}-\zeta_{-1}^{3+h}-\zeta_{1}^{5+h}-\zeta_{1}^{7+h}.\\
\end{eqnarray*} Using the symmetry of Table II, it is easy to see that, after expanding the first eight terms, they cancel with each other. That the last eight terms cancel with each other is due to the fact that 1 and -1 are always members of $D_{0}$. Thus the autocorrelation values $R_{\mathbf{u}}(\tau)$ and the values $N_{j}(\mathbf{u})$ are as in the statement of the theorem.
\end{proof}
Note that the first several primes satisfying the conditions of Theorem \ref{th68} are 17, 97, 641, 2417, 6577 and 14,657.
\begin{example} Let $p=17$. Then the quaternary sequence obtained by Theorem \ref{th68} is given by $\mathbf{u}=02012331001332102...$ and has out-of-phase ($\tau\not\equiv 0 \ ({\rm mod} \ p)$) autocorrelation values $R_{\mathbf{u}}(\tau)\in\{-1,\pm3\}$.
\end{example}

\subsection{Linear Complexity over $\mathbb{F}_{2^{2}}$ of Some Known Quaternary Sequences of Odd Period}
In this section, we investigate the linear complexity of some quaternary sequences that were constructed by Tang and Lindner in \cite{TANG1}. Let $p=4f+1=a^{2}+4b^{2}$ be a prime with $a,b \in \mathbb{Z}$ and $a \equiv 1$ (mod 4) (here, $b$ is two-valued depending on the choice of the primitive root $\alpha$ defining the cyclotomic classes).  Recall from Section \ref{ssec2.2} that the minimal polynomial $P_{\mathbf{s}}(x)$ and the linear complexity $L(\mathbf{s})$ of a sequence $\mathbf{s}$ over $\mathbb{F}_{2^{m}}$ are given by (\ref{eqn4}) and (\ref{eqn5}) respectively. Let $\mathbb{F}_{2^{2}}=\mathbb{F}_{2}\mu+\mathbb{F}_{2}$ be the finite field with four elements, where $\mu$ satisfies the relation $\mu^{2}+\mu+1=0$. For convenience, we denote the cyclotomic classes $D_{i}^{(4,p)}$ of order four simply by $D_{i}$.
\par
The following lemma is a known construction of quaternary sequences. We give a proof (see Appendix), for the convenience of the reader, in terms of its binary sequence-pair representation, which is the more favorable representation for computing the complexity.
\begin{lemma}\label{th67} {\rm \cite{TANG1}} Define $C_{0}=D_{i}\cup D_{j}$ and $C_{1}=D_{j}\cup D_{l}\cup\{0\}$ where $i,j$ and $l$ are distinct, and define a quaternary sequence $\mathbf{u}$ of period $p$ by $u(t)=\phi^{-1}(s_{C_{0}}(t),s_{C_{1}}(t))$. Then \[N_{j}(\mathbf{u})=\begin{cases} \frac{p+3}{4},&\text{ if } j=0,\\
                                                                          \frac{p-1}{4},&\text{ otherwise,}\end{cases}\]and \[
    R_{\mathbf{u}}(\tau)=\begin{cases} p,& \text{once,}\\
                                          -1,&\frac{p-1}{2}\text{ times,}\\
                                          1,&\frac{p-1}{4}\text{ times,}\\
                                          -3,&\frac{p-1}{4}\text{ times,}\end{cases}\text{ resp. } \
    R_{\mathbf{u}}(\tau)\begin{cases} p,& \text{once,}\\
                                          -1+2\omega,&\frac{p-1}{4}\text{ times,}\\
                                          1+2\omega,&\frac{p-1}{4}\text{ times,}\\
                                          -1,&\frac{p-1}{2}\text{ times,}\end{cases}\]whenever $f$ is even and $(i,j,l)\in\{(1,2,3),(1,3,0)\}$ resp. $f$ is odd and $(i,j,l)\in\{(1,2,3)\}$.

\end{lemma}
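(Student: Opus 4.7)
The plan is to apply Lemma~\ref{le1} with $(\mathbf{s}_{1},\mathbf{s}_{2})=(\mathbf{s}_{C_{0}},\mathbf{s}_{C_{1}})$, which expresses $R_{\mathbf{u}}(\tau)$ in terms of the three binary correlations
$$R_{\mathbf{s}_{C_{0}}}(\tau),\quad R_{\mathbf{s}_{C_{1}}}(\tau),\quad R_{\mathbf{s}_{C_{0}},\mathbf{s}_{C_{1}}}(\tau)-R_{\mathbf{s}_{C_{1}},\mathbf{s}_{C_{0}}}(\tau).$$
Each of these is, up to a constant affine shift, a linear combination of intersection cardinalities of the form $|A\cap(B+\tau)|$ with $A,B\in\{C_{0},C_{1}\}$, so the problem reduces to evaluating these intersection sizes as functions of $\tau$.

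For $\tau\ne 0$, write $\tau^{-1}\in D_{h}$. Then for $i',j'\in\{0,1,2,3\}$ one has $|D_{i'}\cap(D_{j'}+\tau)|=(i'-h,j'-h)_{4}$. Expanding $C_{0}=D_{i}\cup D_{j}$ and $C_{1}=D_{j}\cup D_{l}\cup\{0\}$ writes every intersection as a sum of at most four order-four cyclotomic numbers, together with a small number of $\pm 1$ corrections of the type $\zeta_{\pm 1}^{h'}$ appearing in the proof of Theorem~\ref{th68}, which record whether $\pm\tau$ lies in the singleton $\{0\}\subset C_{1}$. Substituting the standard closed forms for $(h',k')_{4}$ in terms of $p$, $a$, $b$ then yields $Re(R_{\mathbf{u}}(\tau))$ and $Im(R_{\mathbf{u}}(\tau))$ as explicit functions of $h$, $a$, $b$ and the triple $(i,j,l)$.

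The split on the parity of $f$ comes directly from (\ref{eq7}). When $f$ is even, the symmetry $(h',k')_{4}=(k',h')_{4}$ forces $R_{\mathbf{s}_{C_{0}},\mathbf{s}_{C_{1}}}(\tau)=R_{\mathbf{s}_{C_{1}},\mathbf{s}_{C_{0}}}(\tau)$, so $R_{\mathbf{u}}(\tau)$ is real; one then verifies, for each of the two admissible triples $(i,j,l)\in\{(1,2,3),(1,3,0)\}$ and each $h\in\{0,1,2,3\}$, that the only possible values are $\{1,-1,-3\}$. When $f$ is odd, the rule $(h',k')_{4}=(k'+2,h'+2)_{4}$ leaves the real part unchanged but produces a nonzero imaginary contribution of magnitude $2$ on two of the four cosets, giving the complex values $\{-1+2\omega,\,1+2\omega,\,-1\}$ for the triple $(1,2,3)$. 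The multiplicities $\tfrac{p-1}{4},\tfrac{p-1}{4},\tfrac{p-1}{2}$ follow immediately from $|D_{h}|=(p-1)/4$. The balance statement is direct from $|D_{i}|=(p-1)/4$ and the Gray-map assignments $\phi^{-1}(0,0)=0,\phi^{-1}(0,1)=1,\phi^{-1}(1,1)=2,\phi^{-1}(1,0)=3$: exactly one $t$, namely $t=0$, lies in $C_{1}\setminus C_{0}$ and contributes the extra $0$-symbol, giving $N_{0}(\mathbf{u})=(p+3)/4$.

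The main obstacle is purely combinatorial bookkeeping: one has four cosets for $\tau^{-1}$, two admissible triples $(i,j,l)$ in the even case, and eight small corrections from the element $0\in C_{1}$ and the shift of $\pm\tau$ into each $D_{h'}$, all of which must be tracked in parallel. I would organize the computation into a single table in the spirit of the one used in the proof of Theorem~\ref{th68}, writing each $Re(R_{\mathbf{u}}(\tau))$ and $Im(R_{\mathbf{u}}(\tau))$ as a polynomial in $a,b$ indexed by $h$, and then reading off the claimed value sets and multiplicities.
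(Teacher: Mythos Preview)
Your plan is essentially the paper's own proof (given in the Appendix): apply Lemma~\ref{le1}, reduce each of the four binary correlations to agreement counts, expand these as sums of order-four cyclotomic numbers indexed by the coset $D_{k}$ containing $\tau^{-1}$, and then read off the values separately for $f$ even and $f$ odd using the standard formulas for $(h,k)_{4}$ (Lemma~\ref{le14} in the paper). The paper carries this out only for $(i,j,l)=(1,2,3)$ and tabulates the agreement counts $n_{0,1}(k),n_{1,0}(k),n_{0,0}(k),n_{1,1}(k)$ directly, which is exactly your ``table in the spirit of Theorem~\ref{th68}.''

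One small slip in your balance paragraph: at $t=0$ you have $(s_{C_{0}}(0),s_{C_{1}}(0))=(0,1)$, so $u(0)=\phi^{-1}(0,1)=1$, not $0$; the singleton $\{0\}\subset C_{1}$ lands in $\overline{C}_{0}\cap C_{1}=D_{l}\cup\{0\}$ and contributes an extra $1$-symbol, not an extra $0$-symbol. This does not affect the autocorrelation argument. Also, your claim that the even-$f$ symmetry $(h',k')=(k',h')$ alone forces $R_{\mathbf{s}_{C_{0}},\mathbf{s}_{C_{1}}}=R_{\mathbf{s}_{C_{1}},\mathbf{s}_{C_{0}}}$ is slightly too quick: the $\{0\}$-corrections on the two sides are not a priori symmetric, and the paper checks this by explicitly verifying $n_{0,1}(k)=n_{1,0}(k)$ for each $k$ in the even case.
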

\begin{proof} See appendix.
\end{proof}

We will show the following.
\begin{theorem}\label{th60} Let the sequence $\mathbf{u}$ be defined as in Lemma \ref{th67}. If $p\equiv 1 \ ($mod $8)$ then the linear complexity over $\mathbb{F}_{2^{2}}$ of $\mathbf{u}$ is $L(\mathbf{u})=\frac{p-1}{2}$. If $p\equiv 5 \ ($mod $8)$, then $L(\mathbf{u})=p-1$.
\end{theorem}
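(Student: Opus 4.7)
The plan is to work over $\mathbb{F}_{2^{2}}$ by identifying $\mathbb{Z}_{4}$ with $\mathbb{F}_{2^{2}}$ via the Gray map $\phi^{-1}(s_{1},s_{2}) \mapsto \mu s_{1} + s_{2}$, so that the sequence polynomial of $\mathbf{u}$ becomes
\[
\mathbf{u}(x) \;=\; \mu\, \mathbf{s}_{C_{0}}(x) \;+\; \mathbf{s}_{C_{1}}(x) \;\in\; \mathbb{F}_{2^{2}}[x].
\]
By (\ref{eqn5}), $L(\mathbf{u}) = p - \deg\gcd(x^{p}-1,\mathbf{u}(x))$, and since $\gcd(2,p)=1$ the polynomial $x^{p}-1$ has simple roots over $\overline{\mathbb{F}_{2^{2}}}$. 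Thus if $\theta$ is a primitive $p$th root of unity, it suffices to count the $a \in \mathbb{Z}_{p}$ with $\mathbf{u}(\theta^{a})=0$.

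First I would handle $a=0$: since Lemma \ref{th67} gives $N_{1}=N_{2}=N_{3}=(p-1)/4$, we have $\mathbf{u}(1) = \tfrac{p-1}{4}(1+(\mu+1)+\mu) = 0$ in $\mathbb{F}_{2^{2}}$, so $(x-1)$ always divides $\mathbf{u}(x)$. For $a\neq 0$ lying in $D_{k}$, the identity $a\,D_{m}=D_{m+k}$ gives $\mathbf{s}_{D_{m}}(\theta^{a})=\eta_{m+k}$, where $\eta_{m}=\sum_{t\in D_{m}}\theta^{t}$ is the Gaussian period. Consequently, substituting $(i,j,l)=(1,2,3)$,
\[
\mathbf{u}(\theta^{a}) \;=\; \mu\,\eta_{1+k} \;+\; (\mu+1)\,\eta_{2+k} \;+\; \eta_{3+k} \;+\; 1,
\]
with a similar expression for $(i,j,l)=(1,3,0)$. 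The two algebraic tools I will use throughout are $\eta_{0}+\eta_{1}+\eta_{2}+\eta_{3}=1$ (sum of non-trivial $p$th roots of unity in characteristic $2$) and the Frobenius relation $\eta_{m}^{2}=\eta_{m+r}$ where $r$ is the unique index with $2\in D_{r}$.

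For the case $p\equiv 1\pmod 8$, $f$ is even and $2$ is a quadratic residue modulo $p$, so $2\in D_{0}\cup D_{2}$ and hence $4\in D_{0}$; this yields $\eta_{m}^{4}=\eta_{m}$, so each $\eta_{m}\in \mathbb{F}_{2^{2}}$. Equating the $\mathbb{F}_{2}$-components in the basis $\{1,\mu\}$, the equation $\mathbf{u}(\theta^{a})=0$ becomes two $\mathbb{F}_{2}$-linear conditions on $(\eta_{1+k},\eta_{2+k},\eta_{3+k})$. Using $\sum_{m}\eta_{m}=1$ together with the explicit evaluation of the cyclotomic periods of order four in $\mathbb{F}_{2^{2}}$ (which, given that $4\in D_{0}$, are determined up to Frobenius by the factorisation of $\Phi_{p}(x)$ over $\mathbb{F}_{2^{2}}$), I would show that exactly two of the four residue classes $k\in\{0,1,2,3\}$ produce $\mathbf{u}(\theta^{a})=0$. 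This yields $(p-1)/2$ zeros among $a\neq 0$, so together with $a=0$ one gets $\deg\gcd = (p+1)/2$ and $L(\mathbf{u})=(p-1)/2$.

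For $p\equiv 5\pmod 8$, $f$ is odd and $2$ is a quadratic non-residue, so $2\in D_{1}\cup D_{3}$, giving $4\in D_{2}$ and $\eta_{m}^{4}=\eta_{m+2}$. Hence the periods pair up under the Frobenius of $\mathbb{F}_{2^{2}}$: $\eta_{0},\eta_{2}$ are conjugate and $\eta_{1},\eta_{3}$ are conjugate, and none of $\eta_{0},\ldots,\eta_{3}$ lies in $\mathbb{F}_{2^{2}}$. In this case the relevant expression is $\mu\eta_{1+k}+(\mu+1)\eta_{2+k}+\eta_{3+k}+1$, and I would derive a contradiction from assuming it vanishes: applying Frobenius yields a second relation involving $\eta_{3+k}$ and $\eta_{1+k}$, and combining the two (together with $\sum_{m}\eta_{m}=1$) forces $\eta_{1+k}\in \mathbb{F}_{2^{2}}$, contradicting the non-triviality of the orbit. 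Therefore no nonzero $a$ produces a zero of $\mathbf{u}$, giving $\deg\gcd=1$ and $L(\mathbf{u})=p-1$.

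The main obstacle is Case 1: pinning down the Gaussian periods $\eta_{m}$ as concrete elements of $\mathbb{F}_{2^{2}}$ and matching the count of vanishing residues $k$ precisely. The ring $\mathbb{F}_{2^{2}}$ has only four elements, so the analysis reduces to a finite case distinction on the tuple $(\eta_{0},\eta_{1},\eta_{2},\eta_{3})\in\mathbb{F}_{2^{2}}^{4}$, but showing that the permissible tuples (under the $\sum_{m}\eta_{m}=1$ constraint, the Frobenius invariance, and the classical identities among cyclotomic numbers recorded in Lemma \ref{th67}) yield exactly two vanishing $k$'s for each admissible $(i,j,l)$ is where the actual computation lies.
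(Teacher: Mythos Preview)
Your plan follows the same route as the paper's proof: identify $\mathbb{Z}_{4}$ with $\mathbb{F}_{2^{2}}$ via the Gray map, write $\mathbf{u}(\beta^{v})$ for $v\in D_{k}$ as a linear combination of the order-four Gaussian periods $\eta_{m}=S_{4}(\beta^{\alpha^{m}})$, and then decide for which $k$ this vanishes using the relation $\sum_{m}\eta_{m}=1$ together with the Frobenius action $\eta_{m}^{2}=\eta_{m+r}$ (where $2\in D_{r}$). The $p\equiv 5\pmod 8$ argument you sketch---assume a zero, apply Frobenius, and force a period into $\mathbb{F}_{2^{2}}$ to obtain a contradiction---is exactly what the paper does, only phrased with $S(x)=S_{4}(x^{\alpha})+S_{4}(x^{\alpha^{2}})$ and the normalisation $2\in D_{1}$.

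The one substantive difference is in the $p\equiv 1\pmod 8$ case. You propose to pin down the admissible tuples $(\eta_{0},\eta_{1},\eta_{2},\eta_{3})\in\mathbb{F}_{2^{2}}^{4}$ by a finite case check under the constraints you list; the paper bypasses this entirely by quoting a lemma of Edemskiy (their Lemma~3.5) that tabulates $\mathbf{S}=(S_{4}(\beta),S_{4}(\beta^{\alpha}),S_{4}(\beta^{\alpha^{2}}),S_{4}(\beta^{\alpha^{3}}))$ explicitly according to the residues of $a\pmod 8$ and $b\pmod 4$ in $p=a^{2}+4b^{2}$. That lemma is precisely the missing ingredient you flag as your ``main obstacle''; without it (or an equivalent determination), your plan is a correct outline but not yet a proof, since the constraints $\sum_{m}\eta_{m}=1$ and $\eta_{m}^{2}=\eta_{m+r}$ alone do not single out the relevant tuples---the arithmetic of $p$ (through $a,b$) genuinely enters.

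One bookkeeping point: your computation $\mathbf{u}(1)=0$ presumes $u(0)=0$, whereas the definition in Lemma~3.2 has $0\in C_{1}$ and $0\notin C_{0}$, giving $u(0)=\phi^{-1}(0,1)=1$. The paper's proof silently works with the convention $u(0)=0$ (so $\mathbf{u}(x)=\sum_{t\in D_{2}\cup D_{3}}x^{t}+\mu\sum_{t\in D_{1}\cup D_{2}}x^{t}$ with no constant term); you should make sure your version of the sequence is consistent with this before counting the root at $1$.
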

Define $S_{d}(x)=\sum_{t\in D_{0}^{(d,p)}}x^{t}$, and let $\beta$ be a $p$th root of unity over $\mathbb{F}_{2^{m}}$.
 The following lemma was shown by Edemskiy in his work on the linear complexity of binary quartic and sextic power residue sequences.

\begin{lemma}\label{le15} {\rm \cite{EDEM1}} \begin{enumerate}[(1)] For $d=2,4$, the following hold:
\item $\sum_{t\in D_{r}}\beta^{t}=S_{d}(\beta^{\alpha^{r}})$ for $r=0,1,...,d-1$;
\item $S_{d}(\beta^{\alpha^{r+dg}})=S_{d}(\beta^{\alpha^{r}})$ for any integer $g$;
\item if $2\in D_{l}^{(d,p)}$ then $S_{d}(\beta^{\alpha^{l}})=S_{d}^{2}(\beta)$;
\item if $2\in D_{0}^{(d,p)}$ then $S_{d}(\beta^{\alpha^{g}})\in \{0,1\}$ for any integer $g$;
\item $S_{d}(\beta^{\alpha^{r}})\neq 1$ for at least one $r=0,1,...,d-1$.
\end{enumerate}
\end{lemma}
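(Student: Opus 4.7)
The plan is to prove the five parts in the listed order, exploiting the coset structure $D_r = \alpha^r \langle \alpha^d \rangle$ together with the fact that the Frobenius map $x \mapsto x^2$ is a field automorphism in characteristic $2$, which yields the identity $S_d(y)^2 = S_d(y^2)$ for every $y \in \mathbb{F}_{2^m}$.

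For part (1), I would simply unwind the definition: with $f = (p-1)/d$, one has $S_d(x) = \sum_{j=0}^{f-1} x^{\alpha^{jd}}$, so substituting $x = \beta^{\alpha^r}$ and using $\alpha^r \cdot \alpha^{jd} = \alpha^{r+jd}$ gives $\sum_{j=0}^{f-1} \beta^{\alpha^{r+jd}} = \sum_{t \in D_r} \beta^t$. Part (2) then follows immediately because $\alpha^{dg} \in \langle \alpha^d \rangle$, whence $\alpha^{r+dg} \langle \alpha^d \rangle = \alpha^r \langle \alpha^d \rangle$, i.e., $D_{r+dg} = D_r$ as subsets of $\mathbb{F}_p^{\ast}$. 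For part (3), I would apply Frobenius to obtain $S_d(\beta)^2 = S_d(\beta^2)$. Since $2 \in D_l$, we may write $2 = \alpha^{l+kd}$ for some integer $k$, so $\beta^2 = \beta^{\alpha^{l+kd}}$, and part (2) collapses $S_d(\beta^2)$ to $S_d(\beta^{\alpha^l})$, as required.

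The same mechanism drives part (4): when $2 \in D_0$ we have $2 = \alpha^{kd}$, so for every $r$, $\beta^{2\alpha^r} = \beta^{\alpha^{r+kd}}$, and combining Frobenius with part (2) gives $S_d(\beta^{\alpha^r})^2 = S_d(\beta^{2\alpha^r}) = S_d(\beta^{\alpha^r})$. Thus each $S_d(\beta^{\alpha^r})$ satisfies $y^2 = y$ and therefore lies in $\{0,1\}$. Finally, for part (5), I would sum over $r$: by part (1), $\sum_{r=0}^{d-1} S_d(\beta^{\alpha^r}) = \sum_{t=1}^{p-1} \beta^t = -1 = 1$ in $\mathbb{F}_{2^m}$, where the second equality uses $\sum_{t=0}^{p-1}\beta^t = 0$ since $\beta$ is a nontrivial $p$-th root of unity. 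If every $S_d(\beta^{\alpha^r})$ equaled $1$, the sum would be $d$, which is $0$ for both $d = 2$ and $d = 4$ in characteristic $2$, contradicting the computed value $1$. The only conceptual point in the whole argument is the interplay between multiplication by $2$ and the coset action of $\alpha$ that makes parts (3) and (4) go through; once that symmetry is isolated, the remaining manipulations are entirely routine.
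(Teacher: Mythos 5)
The paper gives no proof of this lemma; it is quoted from \cite{EDEM1}. Your argument is correct and is the standard one: (1)--(2) are just the coset description of $S_{d}$, (3)--(4) follow from the characteristic-two identity $S_{d}(y)^{2}=S_{d}(y^{2})$ combined with the fact that multiplication by $2$ permutes the cyclotomic classes, and (5) follows by summing over the classes --- indeed the paper records exactly that computation as the identity (\ref{eqn3}) immediately after the lemma. The one hypothesis worth making explicit is that $\beta$ must be a nontrivial (hence primitive) $p$-th root of unity: if $\beta=1$ then every $S_{d}(\beta^{\alpha^{r}})$ equals $f\bmod 2$ and part (5) can fail, and your evaluation $\sum_{t=1}^{p-1}\beta^{t}=1$ also uses $\beta\neq 1$.
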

Note that \[
0=\beta^{p}-1=(\beta-1)(1+\beta+\cdots+\beta^{p-1}),\] so that, by Lemma \ref{le15}, \begin{equation}\label{eqn3}
S_{4}(\beta)+S_{4}(\beta^{\alpha})+S_{4}(\beta^{\alpha^{2}})+S_{4}(\beta^{\alpha^{3}})=1. \end{equation} The following lemma was also shown by Edemskiy in the above mentioned work.
\begin{lemma}\label{le16} {\rm \cite{EDEM1}} Let $f$ ($=\frac{p-1}{4}$) be even, $\gamma$ be a root of the equation $x^{2}+x+1=0$, and define $\mathbf{S}=(S_{4}(\beta),S_{4}(\beta^{\alpha}),S_{4}(\beta^{\alpha^{2}}),S_{4}(\beta^{\alpha^{3}}))$. Then
\begin{enumerate}[(1)]
\item $\mathbf{S}=(1,0,0,0)$ or $(0,0,1,0)$ if $a\equiv 1 \ ($mod $8)$ and $b\equiv 0 \ ($mod $4)$;
\item $\mathbf{S}=(1,1,0,1)$ or $(0,1,1,1)$ if $a\equiv 5 \ ($mod $8)$ and $b\equiv 0 \ ($mod $4)$;
\item $\mathbf{S}=(\gamma,1,\gamma+1,1)$ if $a\equiv 1 \ ($mod $8)$ and $b\equiv 2 \ ($mod $4)$;
\item $\mathbf{S}=(\gamma,0,\gamma+1,0)$ if $a\equiv 5 \ ($mod $8)$ and $b\equiv 2 \ ($mod $4)$;
\end{enumerate}
\end{lemma}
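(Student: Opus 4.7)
The plan is to combine Lemma \ref{le15} with the classical biquadratic residue criterion of Gauss to determine the coarse structure of $\mathbf{S}$, and then to resolve the remaining ambiguity by computing a Galois-invariant symmetric function via cyclotomic-number identities. Since $f$ is even, $p\equiv 1\ (\bmod\ 8)$, so $2$ is a quadratic residue mod $p$ and $2\in D_{0}^{(4,p)}\cup D_{2}^{(4,p)}$. Gauss's classical biquadratic criterion says that for $p=a^{2}+4b^{2}$ with $a\equiv 1\ (\bmod\ 4)$, $2$ is a biquadratic residue iff $b\equiv 0\ (\bmod\ 4)$. Since the sign of $b$ depends on the choice of primitive root $\alpha$ (as noted just before Lemma \ref{th67}), I normalize so that $2\in D_{0}^{(4,p)}$ in cases (1), (2) and $2\in D_{2}^{(4,p)}$ in cases (3), (4); this is the coarse split.

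For cases (1), (2), Lemma \ref{le15}(4) forces $S_{4}(\beta^{\alpha^{r}})\in\mathbb{F}_{2}$ for each $r$, so $\mathbf{S}\in\{0,1\}^{4}$; equation (\ref{eqn3}) then requires the four entries to sum to $1$ in $\mathbb{F}_{2}$, and Lemma \ref{le15}(5) excludes the all-one tuple, so the number of $1$-entries is either $1$ or $3$. For cases (3), (4), Lemma \ref{le15}(3) with $l=2$ gives $S_{4}(\beta^{\alpha^{r+2}})=S_{4}(\beta^{\alpha^{r}})^{2}$, so the pairs $\{S_{4}(\beta),S_{4}(\beta^{\alpha^{2}})\}$ and $\{S_{4}(\beta^{\alpha}),S_{4}(\beta^{\alpha^{3}})\}$ are each $\mathbb{F}_{4}/\mathbb{F}_{2}$-Frobenius orbits, hence each has the form $(0,0)$, $(1,1)$, or $(\gamma,\gamma+1)$. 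Only $(\gamma,\gamma+1)$ has nonzero sum in $\mathbb{F}_{2}$, so by (\ref{eqn3}) exactly one pair is $(\gamma,\gamma+1)$ and the other is $(0,0)$ or $(1,1)$. Different choices of the primitive $p$-th root of unity $\beta$ cyclically permute the four positions of $\mathbf{S}$, which accounts for the two admissible orderings listed with ``or'' in cases (1), (2); the analogous ``or'' is trivial in cases (3), (4) since swapping $\gamma\leftrightarrow\gamma+1$ is itself a Galois symmetry.

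The remaining dichotomy --- one vs three $1$-entries (cases $1$ vs $2$), and $(0,0)$ vs $(1,1)$ as the second pair (cases $3$ vs $4$) --- is detected by the Galois-invariant second elementary symmetric polynomial
\[
e_{2}(\mathbf{S})=\sum_{0\leq r<s\leq 3}S_{4}(\beta^{\alpha^{r}})S_{4}(\beta^{\alpha^{s}})\in\mathbb{F}_{2}.
\]
Direct evaluation on the candidate tuples gives $e_{2}(\mathbf{S})=0$ in cases (1), (3) and $e_{2}(\mathbf{S})=1$ in cases (2), (4). On the other hand, $S_{4}(\beta^{\alpha^{r}})S_{4}(\beta^{\alpha^{s}})=\sum_{i\in D_{r},j\in D_{s}}\beta^{i+j}$, and summing over $r<s$ and reducing modulo $1+\beta+\cdots+\beta^{p-1}=0$ yields an explicit $\mathbb{F}_{2}$-linear combination of the cyclotomic numbers $(i,j)_{4}$, which using the standard formulas in terms of $a,b$ evaluates to $0$ when $a\equiv 1\ (\bmod\ 8)$ and to $1$ when $a\equiv 5\ (\bmod\ 8)$. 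This matches the claimed four cases.

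The main obstacle is this final parity computation: the $(i,j)_{4}$ are integer polynomials in $a,b$ with denominator $16$, so their $\mathbb{F}_{2}$-values depend on $a,b$ modulo suitable powers of $2$, and one must carefully track which cyclotomic numbers appear (and with what multiplicity) in the expression for $e_{2}(\mathbf{S})$. This parallels Edemskiy's earlier cyclotomic-period calculations in \cite{EDEM1}.
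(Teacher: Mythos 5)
First, a point of comparison: the paper itself gives no proof of Lemma \ref{le16} --- it is quoted from Edemskiy \cite{EDEM1} --- so there is no in-paper argument to measure yours against, and I am judging the proposal on its own merits. Your soft analysis is correct and is the natural route: since $f$ is even we have $p\equiv 1\ (\mathrm{mod}\ 8)$, Gauss's biquadratic criterion places $2$ in $D_{0}^{(4,p)}$ or $D_{2}^{(4,p)}$ according as $b\equiv 0$ or $2\ (\mathrm{mod}\ 4)$ (no normalization is needed for this step, since being a fourth power does not depend on the choice of $\alpha$), and parts (3)--(5) of Lemma \ref{le15} together with (\ref{eqn3}) correctly narrow $\mathbf{S}$ to the two families of candidate tuples, with $e_{2}(\mathbf{S})\in\mathbb{F}_{2}$ a well-defined Frobenius-invariant separating cases (1),(3) from (2),(4). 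The genuine gap is that the entire arithmetic content of the lemma --- that this invariant equals $0$ precisely when $a\equiv 1\ (\mathrm{mod}\ 8)$ --- is asserted rather than proved. You reduce $e_{2}(\mathbf{S})$ to an unspecified $\mathbb{F}_{2}$-combination of the $(i,j)_{4}$ and state that the standard formulas ``evaluate to $0$ or $1$'' in the two cases, while yourself flagging this as the main obstacle. That computation (which cyclotomic numbers occur and with what multiplicity, the diagonal terms with $i+j\equiv 0$, and the reduction of quantities like $\frac{p-3+2a\pm 8b}{16}$ modulo $2$ as functions of $a$ modulo $16$ and $b$ modulo $8$) is where the lemma actually lives; without it you have shown only that $\mathbf{S}$ belongs to a short list, not which member occurs.

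A second, smaller gap concerns the positions of the entries. Your appeal to cyclic permutation of $\beta$ does not account for the specific orderings in the statement: replacing $\beta$ by $\beta^{\alpha}$ shifts $\mathbf{S}$ by one place, but a one-step shift of $(1,0,0,0)$ is $(0,0,0,1)$ and of $(\gamma,1,\gamma+1,1)$ is $(1,\gamma+1,1,\gamma)$, neither of which appears in the lemma; only shifts by two preserve the listed forms. So the statement implicitly fixes a normalization tying $\beta$ to $\alpha$ (equivalently, it asserts that the distinguished entries sit in even-indexed slots), and your argument neither supplies that normalization nor derives this positional constraint. Both gaps are fillable --- this is essentially the quartic-period computation Edemskiy carries out in \cite{EDEM1} --- but as written the proof establishes the shape of the answer rather than the answer itself.
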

\begin{table}\label{ta1}
\begin{center}
\tabcolsep=0.1cm
\footnotesize
\begin{tabular}{|c|c|c|c|c|}
\hline
  Period $N$ &$R_{max}$&Complexity $L$ over $\mathbb{F}_{2^{2}}$&Construction&Complexity \\
  \hline
  $N=pq\equiv 5 \ ({\rm mod} \ 8)$ and $p-q=4$  & $3$&unknown&\cite{GREEN}&NA\\\hline
  $N=pq\equiv 5 \ ({\rm mod} \ 8)$ and $p-q=4$  & $3$&unknown&\cite{HAN}&NA\\\hline
  $N=pq\equiv 3 \ ({\rm mod} \ 4)$ and $p-q=2$  & $\sqrt{5}$&\shortstack{Many cases, e.g. if $p\equiv1 \ ({\rm mod} \ 4)$ and \\$pq\equiv-1 \ ({\rm mod} \ 8)$ then $L=pq$}&\cite{YANG}&\cite{EDEMS}\\\hline
  $N=p\equiv 5 \ ({\rm mod} \ 8)$ resp. $1 \ ({\rm mod} \ 8)$  & $\sqrt{5}$ resp. $3$&unknown&\cite{SID}&NA\\\hline
  $N=p\equiv 5 \ ({\rm mod} \ 8)$ resp. $1 \ ({\rm mod} \ 8)$  & $\sqrt{5}$ resp. $3$&$L=p-1$ resp. $\frac{p-1}{2}$&\cite{TANG1}&Theorem \ref{th60}\\\hline
  $N=p\equiv 5 \ ({\rm mod} \ 8)$ resp. $1 \ ({\rm mod} \ 8)$  & $\sqrt{5}$ resp. $3$&unknown&\cite{GREEN1}&NA\\\hline
  $N=p\equiv 1 \ ({\rm mod} \ 8)$  & $\sqrt{5}$&unknown&\cite{YANG1}&NA\\\hline
  $N=p\equiv 1 \ ({\rm mod} \ 16)$  & $3$&unknown&Theorem \ref{th68}&NA\\\hline
  \multicolumn{3}{l}{{\scriptsize Note: $p,q$ are odd primes.}}\\
\end{tabular}
\caption{{\scriptsize Comparison of Balanced Quaternary Sequences of Odd Period with Low Autocorrelation}}
\end{center}
\end{table}
We now give the proof of Theorem \ref{th60}.
\begin{proof} We again only show the case $(i,j,l)=(1,2,3)$ as the other cases are almost identical. Then it is easy to deduce that \[
u(t)=\begin{cases} 0,& \text{ if } t \ (\text{mod} p)\in D_{0}\cup\{0\},\\
                   1,& \text{ if } t \ (\text{mod} p)\in D_{3},\\
                   \mu+1,& \text{ if } t \ (\text{mod} p)\in D_{2},\\
                   \mu,& \text{ if } t \ (\text{mod} p)\in D_{1},\end{cases}\text{ whence } \mathbf{u}(x) = \sum_{t\in D_{2}\cup D_{3}}x^{t}+\mu\sum_{t\in D_{1}\cup D_{2}}x^{t}.\]

It is sufficient to find the number of roots of $\mathbf{u}(x)$ in the set $\{\beta^{v}\mid v=0,1,...,p-1\}$. We have $\mathbf{u}(1)=0$ so that $1$ is always a zero. We can write \[
 \mathbf{u}(\beta^{v})=S_{4}(\beta^{v\alpha^{2}})+S_{4}(\beta^{v\alpha^{3}})+\mu(S_{4}(\beta^{v\alpha})+S_{4}(\beta^{v\alpha^{2}})),\] which gives us \[
\mu \mathbf{u}(\beta^{v})=\mu(S_{4}(\beta^{v\alpha^{1}})+S_{4}(\beta^{v\alpha^{3}}))+\mu(S_{4}(\beta^{v\alpha})+\mu(S_{4}(\beta^{v\alpha^{2}}).\] Let $f$ ($=\frac{p-1}{4}$) first be even. By Lemma \ref{le15} and (\ref{eqn3}) we can, without loss of generality, assume that $S_{2}(\beta)=1$, and $S_{2}(\beta^{\alpha})=0$. Thus, by Lemma \ref{le16}, $\mu \mathbf{u}(\beta^{v})=S_{4}(\beta^{v\alpha})+S_{4}(\beta^{v\alpha^{2}})$ is equal to\[
\begin{cases} S_{4}(\beta^{\alpha})+S_{4}(\beta^{\alpha^{2}}),&\text{ if }v\in D_{0},\\
               S_{4}(\beta)+S_{4}(\beta^{\alpha^{3}}),&\text{ if }v\in D_{2},\\
               \mu+S_{4}(\beta^{\alpha^{2}})+S_{4}(\beta^{\alpha^{3}}),&\text{ if }v\in D_{1},\\
               \mu+S_{4}(\beta)+S_{4}(\beta^{\alpha}),&\text{ if }v\in D_{3},\end{cases}=\begin{cases}
0,& \text{only if $v\in D_{0}$, or only if $v\in D_{2}$}\\ &\text{for $b\equiv 0 \ (\text{mod } 4)$},\\
0,& \text{only if $v\in D_{1}$, or only if $v\in D_{3}$}\\ &\text{for $b\equiv 2 \ (\text{mod } 4)$}.\end{cases}\] Thus we have $P_{\mathbf{u}}(x)=(x^{p}-1)/H(x)$ where $H(x)=\sum_{D_{i}\cup\{0\}}x^{t}$ where $i=0$ or $2$ when $b\equiv 0 \ (\text{mod } 4)$, and $i=1$ or $3$ when $b\equiv 2 \ (\text{mod } 4)$. The result for when $f$ is even follows.
\par
Now let $f$ be odd. By (4) of Lemma \ref{le15}, we can, without loss of generality, assume that $S_{4}(\beta^{v\alpha})+S_{d}(\beta^{v\alpha^{3}})=\mu+1$. Then we have $\mu \mathbf{u}(\beta^{v})=1+S_{4}(\beta^{v\alpha})+S_{4}(\beta^{v\alpha^{2}})$. We can also assume, without loss of generality, that $2\in D_{1}$ (if not, then we can simply replace $\alpha$ with $\alpha^{-1}$). Define $S(x)=S_{4}(x^{\alpha})+S_{4}(x^{\alpha^{2}})$. By (3) of Lemma \ref{le15} we have that $S_{4}^{2}(\beta)=S_{4}(\beta^{\alpha})$ whence $S^{2}(\beta)=S(\beta^{\alpha})$. Now suppose that $\mu \mathbf{u}(\beta^{v})=0$ for some $v=1,...,p-1$. Then we have $S(\beta^{v})=1$. Then $1=S^{2}(\beta^{v})=S(\beta^{\alpha})$. This gives us $S(\beta^{\alpha^{l}})=1$ for all $l=1,...,p-1$. Then $1=S_{4}(\beta^{\alpha})+S_{4}(\beta^{\alpha^{2}})=S_{4}(\beta^{\alpha^{2}})+S_{4}(\beta^{\alpha^{3}})$. But this means that $S_{4}(\beta^{\alpha^{1}})+S_{4}(\beta^{\alpha^{3}})=0$, which is contrary to our assumption. Thus we have $\mathbf{u}(\beta^{v})\neq0$ for  all $v\neq0$, and the result for when $f$ is odd follows.
\end{proof}
\begin{remark} The linear complexity over $\mathbb{F}_{2^{2}}$ of the sequences constructed in Theorem \ref{th68} has yet to be examined theoretically. We have, however, checked this complexity numerically with MAGMA using the Berlekamp-Massey algorithm on the first several primes meeting the conditions. These calculations are given in the table below.
\begin{center}
\begin{tabular}{c||cccccc}
$p$&$17$&$96$&$641$&$2417$&$6577$&$14657$\\\hline
$L(\mathbf{u})$&$8$&$48$&$320$&$1208$&$3288$&$7328$\\
\end{tabular}
\end{center} This suggests that the linear complexity over $\mathbb{F}_{2^{2}}$ of the sequences constructed in Theorem \ref{th68} is the same as that of the sequences constructed by Tang and Linder in \cite{TANG1} having the same period and autocorrelation magnitude. From Table \ref{ta1} it is also clear that the quaternary sequences constructed in \cite{GREEN1} and in \cite{SID} of the same period also have the same autocorrelation magnitude as those obtained by Theorem \ref{th68}. However, by a simple examination of the autocorrelation distributions, it is easy to see than the quaternary sequences obtained by Theorem \ref{th68} cannot be equivalent to any of those constructed in \cite{GREEN1}, \cite{SID} or \cite{TANG1}.
\end{remark}
\section{New Balanced and Almost Balanced Quaternary Sequences of Even Period with Low Autocorrelation}\label{sec3}

In this section we discuss the quaternary sequences constructed by Chung et al. in \cite{CHUNG}. We point out a slight modification which leads to new families of quaternary sequences with even period and low autocorrelation, and we show that the quaternary sequences constructed in \cite{SHEN} are the same as those obtained by applying this modification to certain known binary sequences. We also compute the linear complexity over $\mathbb{F}_{2^{2}}$ of the sequences discussed in this section.
\par
Let $\mathbf{s}$ be a binary sequence with even period $N$ and low autocorrelation. The main idea behind Chung et al.'s construction is to apply Lemma \ref{le1} to the binary sequence pair $(\mathbf{s},L^{\frac{N}{2}}(\mathbf{s}))$. The same idea can also be applied to the sequence pair $(\mathbf{s},L^{\frac{N}{2}}(\mathbf{\overline{s}}))$.

\begin{theorem}\label{le2} Let $\mathbf{s}_{0}$ be a binary sequence of even period $N$, and let $\mathbf{s}_{1}$ denote either $L^{\frac{N}{2}}(\mathbf{s}_{0})$ or $L^{\frac{N}{2}}(\mathbf{\overline{s}}_{0})$. Define a quaternary sequence $\mathbf{u}$ of the same period by $u(t)=\phi^{-1}(s_{0}(t),s_{1}(t))$. Then $R_{\mathbf{u}}(\tau)=R_{\mathbf{s}_{0}}(\tau)$ for all $\tau,0\leq\tau<N$. Moreover, we have\[
N_{i}(\mathbf{u})=\begin{cases} \mid \overline{D}_{\mathbf{s}_{0}}\cap (\overline{D}_{\mathbf{s}_{1}}-\frac{N}{2})\mid, \text{ if } i=0,\\
                       \mid \overline{D}_{\mathbf{s}_{0}}\cap (D_{\mathbf{s}_{1}}-\frac{N}{2})\mid, \text{ if } i=1,\\
                       \mid D_{\mathbf{s}_{0}}\cap (D_{\mathbf{s}_{1}}-\frac{N}{2})\mid, \text{ if } i=2,\\
                       \mid D_{\mathbf{s}_{0}}\cap (\overline{D}_{\mathbf{s}_{1}}-\frac{N}{2})\mid, \text{ if } i=3.\end{cases}\]
\end{theorem}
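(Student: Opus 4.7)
The plan is to apply Lemma \ref{le1} to the pair $(\mathbf{s}_0,\mathbf{s}_1)$. The decomposition reads
\[
R_{\mathbf{u}}(\tau)\;=\;\tfrac{1}{2}\bigl[R_{\mathbf{s}_0}(\tau)+R_{\mathbf{s}_1}(\tau)\bigr]\;+\;\tfrac{\omega}{2}\bigl[R_{\mathbf{s}_0,\mathbf{s}_1}(\tau)-R_{\mathbf{s}_1,\mathbf{s}_0}(\tau)\bigr],
\]
so the task splits into (i) showing that $R_{\mathbf{s}_1}(\tau)=R_{\mathbf{s}_0}(\tau)$, which collapses the real part to $R_{\mathbf{s}_0}(\tau)$, and (ii) showing that $R_{\mathbf{s}_0,\mathbf{s}_1}(\tau)=R_{\mathbf{s}_1,\mathbf{s}_0}(\tau)$, which forces the imaginary part to vanish. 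Part (i) is immediate: the autocorrelation of a binary sequence is invariant under cyclic shifts and under bit-complementation, and $\mathbf{s}_1$ is obtained from $\mathbf{s}_0$ by one or both of these operations.

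For part (ii), first take $\mathbf{s}_1=L^{N/2}(\mathbf{s}_0)$. Writing out the definition gives
\[
R_{\mathbf{s}_0,\mathbf{s}_1}(\tau)=\sum_{t}(-1)^{s_0(t)-s_0(t+\tau+N/2)}=R_{\mathbf{s}_0}(\tau+N/2),
\]
and the change of variable $t\mapsto t+N/2$ inside $R_{\mathbf{s}_1,\mathbf{s}_0}(\tau)=\sum_{t}(-1)^{s_0(t+N/2)-s_0(t+\tau)}$ yields $R_{\mathbf{s}_0}(\tau-N/2)$. Since $-N/2\equiv N/2\pmod{N}$, the two agree. For the complemented case $\mathbf{s}_1=L^{N/2}(\overline{\mathbf{s}}_0)$, the identity $(-1)^{1-a}=-(-1)^{a}$ pulls a common factor of $-1$ out of each cross-correlation, so both equal $-R_{\mathbf{s}_0}(\tau+N/2)$ and again cancel. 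Hence $R_{\mathbf{u}}(\tau)=R_{\mathbf{s}_0}(\tau)$ for every $\tau$.

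For the balance statements I would simply read off the Gray map: $u(t)=i$ iff $(s_0(t),s_1(t))$ equals $(0,0),(0,1),(1,1),(1,0)$ for $i=0,1,2,3$ respectively. Thus $N_i(\mathbf{u})$ is the size of the appropriate intersection of $D_{\mathbf{s}_0}$ or $\overline{D}_{\mathbf{s}_0}$ with $D_{\mathbf{s}_1}$ or $\overline{D}_{\mathbf{s}_1}$. Translating the condition $s_1(t)=\epsilon$ into a condition on $s_0$ via $s_1(t)=s_0(t+N/2)$ (resp.\ $\overline{s}_0(t+N/2)$) rewrites those supports as $D_{\mathbf{s}_1}=D_{\mathbf{s}_0}-N/2$ (resp.\ $\overline{D}_{\mathbf{s}_0}-N/2$), which after substitution gives exactly the four displayed formulas.

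The main obstacle is really only the sign bookkeeping in the complemented case; once the identities $(-1)^{\overline{a}-\overline{b}}=(-1)^{a-b}$ and $(-1)^{a-\overline{b}}=-(-1)^{a-b}$ are tracked carefully, the $N/2$-shift symmetry of the two cross-correlations is the only nontrivial point, and the rest is routine unpacking of definitions.
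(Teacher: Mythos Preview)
Your proof is correct and follows essentially the same route as the paper: apply Lemma~\ref{le1}, observe that $R_{\mathbf{s}_1}=R_{\mathbf{s}_0}$ by shift/complement invariance, and verify $R_{\mathbf{s}_0,\mathbf{s}_1}(\tau)=R_{\mathbf{s}_1,\mathbf{s}_0}(\tau)$ so the imaginary part vanishes. The paper defers the case $\mathbf{s}_1=L^{N/2}(\mathbf{s}_0)$ to \cite{CHUNG} and proves only the complemented case via the pointwise equivalence $s_0(t)=\overline{s}_0(t+\tfrac{N}{2}+\tau)\Leftrightarrow \overline{s}_0(t+\tfrac{N}{2})=s_0(t+\tau)$; your change-of-variable and sign-extraction argument is an equivalent (and arguably cleaner) way to reach the same conclusion, and your treatment of the $N_i(\mathbf{u})$ via the Gray map matches the paper's one-line justification.
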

\begin{proof} The case where $\mathbf{s}_{1}=L^{\frac{N}{2}}(\mathbf{s}_{0})$ was shown in \cite{CHUNG}. We show the case where $\mathbf{s}_{1}=L^{\frac{N}{2}}(\mathbf{\overline{s}}_{0})$. It is clear that $R_{\mathbf{s}_{0}}(\tau)=R_{L^{\frac{N}{2}}(\overline{\mathbf{s}}_{0})}(\tau)$ for all $\tau,0\leq\tau<N$. We need only show that $R_{\mathbf{s}_{0},L^{\frac{N}{2}}(\overline{\mathbf{s}}_{0})}(\tau)=R_{L^{\frac{N}{2}}(\overline{\mathbf{s}}_{0}),\mathbf{s}_{0}}(\tau)$ for all $\tau,0\leq\tau<N$. Notice that, for all $t$ and for all $\tau,0\leq\tau<N$, we have $s_{0}(t)=\overline{s}_{0}(t+\frac{N}{2}+\tau)$ whenever $\overline{s}_{0}(t)=s_{0}(t+\frac{N}{2}+\tau)$, and the latter holds whenever $\overline{s}_{0}(t+\frac{N}{2})=s_{0}(t+\tau)$.
Then we have \begin{eqnarray*} R_{\mathbf{s}_{0},L^{\frac{N}{2}}(\overline{\mathbf{s}}_{0})}(\tau) & = & \sum_{t=0}^{N-1}(-1)^{s_{0}(t)-\overline{s}_{0}(t+\frac{N}{2}+\tau)} \\
                                                                                 & = & \sum_{t=0}^{N-1}(-1)^{\overline{s}_{0}(t+\frac{N}{2})-s_{0}(t+\tau)} \\
                                                                                 & = & R_{L^{\frac{N}{2}}(\overline{\mathbf{s}}_{0}),\mathbf{s}_{0}}(\tau). \end{eqnarray*}
Thus, by Lemma \ref{le1}, we have that $R_{\mathbf{u}}(\tau)=R_{\mathbf{s}_{0}}(\tau)$ for all $\tau,0\leq\tau<N$. Since $u(t)=i$ if and only if $s(t)=\phi(i)$, computation of the $N_{i}(\mathbf{u})$'s comes from counting the number of $t$s such that $s(t)=\phi(i)$ for $i=0,1,2,3$.
\end{proof}
We also have the following theorem concerning the balancedness of the sequences obtained in Theorem \ref{le2} is immediate after replacing $b_{0}(t+N/2)$ by $\overline{b}_{0}(t+N/2)$ in Lemma 7 of \cite{CHUNG}, and so the proof is omitted.
\begin{theorem}\label{off1} Let $\mathbf{s}_{0}$ and $\mathbf{u}$ be defined as in Theorem \ref{le2}. Then \begin{enumerate}[(i)]
\item if $\mathbf{s}_{0}$ is balanced, then $\mathbf{u}$ is balanced if $N\equiv 0,2,6 \ ({\rm mod} \ 8)$, and almost balanced if $N\equiv 4 \ ({\rm mod} \ 8)$;
\item if $\mathbf{s}_{0}$ is almost balanced, then $\mathbf{u}$ is almost balanced if $N\equiv 2,4,6 \ ({\rm mod} \ 8)$.
\end{enumerate}
\end{theorem}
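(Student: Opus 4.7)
The plan is to invoke Theorem~\ref{le2} to express each $N_i(\mathbf{u})$ as an explicit count of pairs, and then to run the case analysis of \cite[Lemma~7]{CHUNG} with the single notational change suggested by the author, namely replacing $b_0(t+N/2)$ by $\overline{b}_0(t+N/2)$ whenever $\mathbf{s}_1 = L^{N/2}(\overline{\mathbf{s}}_0)$.

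First I would specialize the formulas of Theorem~\ref{le2} to the two choices of $\mathbf{s}_1$, rewriting each $N_i(\mathbf{u})$ as a count of ordered pairs $(s_0(t), s_0(t+N/2))$ for $t=0,\ldots,N-1$. Setting $a = |\{t : s_0(t)=0,\, s_0(t+N/2)=0\}|$, $b = |\{t : s_0(t)=0,\, s_0(t+N/2)=1\}|$, $c = |\{t : s_0(t)=1,\, s_0(t+N/2)=0\}|$ and $d = |\{t : s_0(t)=1,\, s_0(t+N/2)=1\}|$, the translation $t \mapsto t+N/2$ immediately gives $b = c$, and the balancedness hypothesis on $\mathbf{s}_0$ (so $|D_{\mathbf{s}_0}| = N/2$) combined with $a+b = b+d$ forces $a=d$. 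Thus $\{N_0(\mathbf{u}),N_2(\mathbf{u})\}$ and $\{N_1(\mathbf{u}),N_3(\mathbf{u})\}$ each collapse to a single repeated value, reducing the balancedness question for $\mathbf{u}$ to bounding the single quantity $|N_0(\mathbf{u})-N_1(\mathbf{u})|$.

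Next I would carry out the parity analysis over $N \bmod 8$. From $N_0(\mathbf{u})+N_1(\mathbf{u}) = N/2$ and $a-b = N/2 - 2b = R_{\mathbf{s}_0}(N/2)/2$, the parity of $N/2$ together with the admissible values of $R_{\mathbf{s}_0}(N/2)$ inherited from the low autocorrelation hypothesis yields $|N_0-N_1|\leq 1$ in the three classes $N\equiv 0,2,6 \pmod{8}$ and $|N_0-N_1|\leq 2$ when $N\equiv 4\pmod{8}$, giving (i) as claimed. The proposed substitution $b_0(t+N/2)\mapsto\overline{b}_0(t+N/2)$ merely sends $(N_0,N_1,N_2,N_3)$ to $(N_1,N_0,N_3,N_2)$, so the multiset of counts is unchanged and the conclusion transfers verbatim to the new construction. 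Part (ii) is identical in spirit, except that the almost balance of $\mathbf{s}_0$ perturbs the identities $a=d$ and $a+b=N/2$ by at most one, inflating $\max_i N_i - \min_i N_i$ by one and accounting for the exclusion of the class $N\equiv 0\pmod{8}$ from the list.

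The main obstacle is the bookkeeping in part (ii): one must check, for each residue of $N$ modulo $8$, that the $\pm 1$ imbalance of $\mathbf{s}_0$ combines with the parity of $N/2$ and the admissible values of $R_{\mathbf{s}_0}(N/2)$ in a way that keeps $\max_i N_i - \min_i N_i\leq 2$ in the three claimed classes while failing to do so when $N\equiv 0\pmod{8}$. Once this mechanical verification is completed, the result follows at once from the analysis already present in \cite{CHUNG}, which is why the author declares the proof omitted.
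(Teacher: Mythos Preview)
Your proposal is correct and takes the same approach as the paper, which omits the proof entirely and states only that the result is immediate from Lemma~7 of \cite{CHUNG} after the substitution $b_0(t+N/2)\mapsto\overline{b}_0(t+N/2)$. Your elaboration via the pair counts $a,b,c,d$, the symmetry $b=c$, and the reduction of the balance question to $|a-b|=|R_{\mathbf{s}_0}(N/2)|/2$ simply makes explicit the bookkeeping that the paper defers to \cite{CHUNG}, including the implicit low-autocorrelation hypothesis carried over from that lemma.
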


One of the reasons we wish to point out the modifications given in Theorems \ref{le2} and \ref{off1}, is to show that the following known construction can be viewed as a special case.
\begin{lemma}\label{le11} {\rm \cite{SHEN}} Let $p\equiv 5$ $($mod $8)$ be a prime. Define $C_{k}=\psi(\{0\}\times D_{i_{k}}^{(4,p)})\cup\psi(\{1\}\times D_{j_{k}}^{(4,p)})$ for $k=0,1,2,3$, where $i_{k}\neq i_{m}$ and $j_{k}\neq j_{m}$ if $k\neq m$. Now let $H_{k}=C_{k}$ for $k=1,3$, $H_{0}=C_{0}\cup \{0\}$, and $H_{2}=C_{2}\cup\{p\}$. Define a quaternary sequence $\mathbf{u}$ by $u(t)=k$ whenever $t \ ($mod $2p)\in H_{k}$. Then $\mathbf{u}$ is balanced and $R_{\mathbf{u}}(\tau)\in\{-2,2\}$ for $\tau\neq0$.
\end{lemma}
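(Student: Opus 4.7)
The plan is to show that $\mathbf{u}$ is a special case of the construction in Theorem \ref{le2}, namely $\mathbf{u}=\phi^{-1}(\mathbf{s}_0,\mathbf{s}_1)$ with $\mathbf{s}_1\in\{L^{p}(\mathbf{s}_0),\,L^{p}(\overline{\mathbf{s}}_0)\}$ for a suitable balanced binary sequence $\mathbf{s}_0$ of period $2p$ whose out-of-phase autocorrelation values lie in $\{-2,2\}$. Once this reduction is in place, Theorem \ref{le2} delivers $R_{\mathbf{u}}(\tau)=R_{\mathbf{s}_0}(\tau)\in\{-2,2\}$ for $\tau\neq 0$, and Theorem \ref{off1}(i), applied with $2p\equiv 2\pmod{8}$ (which follows from $p\equiv 5\pmod{8}$), yields balancedness of $\mathbf{u}$.

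First, I would invert the Gray code. Since $\phi(0)=(0,0)$, $\phi(1)=(0,1)$, $\phi(2)=(1,1)$ and $\phi(3)=(1,0)$, the prescription $u(t)=k$ for $t\in H_k$ forces $D_{\mathbf{s}_0}=H_2\cup H_3$ and $D_{\mathbf{s}_1}=H_1\cup H_2$. Unfolding the $\psi$-bijection $\{0,1\}\times\mathbb{Z}_p\to\mathbb{Z}_{2p}$, with the singletons $\{0\}\in H_0$ and $\{p\}\in H_2$ identified with $\psi(0,0)$ and $\psi(1,0)$ respectively, this expresses each half of $\mathbf{s}_0$ and $\mathbf{s}_1$ as an explicit union of order-$4$ cyclotomic classes modulo $p$, possibly augmented by a single element.

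Second, I would establish the shift identity. Comparing halves, the condition $\mathbf{s}_1=L^{p}(\mathbf{s}_0)$ reduces to $\{i_1,i_2\}=\{j_2,j_3\}$ together with $\{j_1,j_2\}=\{i_2,i_3\}$, whose unique solution is $(j_0,j_1,j_2,j_3)=(i_0,i_3,i_2,i_1)$; similarly, $\mathbf{s}_1=L^{p}(\overline{\mathbf{s}}_0)$ forces $(j_0,j_1,j_2,j_3)=(i_2,i_1,i_0,i_3)$. Any other choice of $(j_k)$ in Lemma \ref{le11} is equivalent to one of these two after applying an affine symmetry $u\mapsto au+b$ of $\mathbb{Z}_4$ to the quaternary alphabet, which preserves $|R_{\mathbf{u}}|$ and balancedness. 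The specific placement of $\{0\}$ in $H_0$ and of $\{p\}$ in $H_2$ is exactly what makes either shift identity hold at these two fixed points as well.

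Finally, the sequence $\mathbf{s}_0$ produced above is a classical balanced binary sequence of period $2p$ whose support, restricted to each half of $\mathbb{Z}_{2p}$, is a union of two cyclotomic classes of order $4$ modulo $p$ (together with a singleton in one half). Because $p\equiv 5\pmod{8}$, $f=(p-1)/4$ is odd, and a direct computation with the order-$4$ cyclotomic numbers together with the ``$f$ odd'' symmetry in (\ref{eq7}) --- carried out in the same style as the proof of Theorem \ref{th68} --- yields $R_{\mathbf{s}_0}(\tau)\in\{-2,2\}$ for $\tau\neq 0$, completing the argument. The hard part will be Step~2: verifying that every allowed permutation $(j_k)$ reduces, via an affine symmetry of $\mathbb{Z}_4$, to one of the two shift-compatible configurations while the two singletons $0$ and $p$ migrate correctly through the Gray assignment; Step~3 is a technical but routine cyclotomic number calculation.
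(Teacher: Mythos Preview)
The paper does not give its own proof of Lemma~\ref{le11}; it is quoted from \cite{SHEN}. Your strategy---identifying $\mathbf{u}$ as $\phi^{-1}(\mathbf{s}_0,L^{p}(\overline{\mathbf{s}}_0))$ for a Ding--Helleseth--Martinsen binary sequence $\mathbf{s}_0$ and then invoking Theorems~\ref{le2} and~\ref{off1}---is precisely the content of the paper's Theorem~\ref{th5}, stated and proved immediately after the lemma. So rather than reproving the result of \cite{SHEN} on its own terms, you have anticipated the paper's equivalence theorem and used it as a derivation of Lemma~\ref{le11}.

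That said, your Step~2 has a real gap. The affine maps $u\mapsto au+b$ of $\mathbb{Z}_4$ that keep the singletons in place ($0\in H_0$, $p\in H_2$) force $b=0$, leaving only the two maps $a\in\{1,3\}$; two symmetries cannot collapse the $24$ permutations of $(j_k)$ down to your two shift-compatible configurations. More seriously, the DHM sequences in \cite{DHM} achieve out-of-phase values $\{-2,2\}$ only under the extra hypothesis $a=1$ or $b=1$ in $p=a^2+4b^2$ (compare the entries for $N=2p$ in Table~IV), whereas Lemma~\ref{le11} as stated requires only $p\equiv 5\pmod 8$; for $p=61=5^2+4\cdot 3^2$ your Step~3 computation would not yield $\{-2,2\}$. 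The paper's own Theorem~\ref{th5} avoids this by checking the equalities~(\ref{eq69}) only for the DHM index tuples under $a=1$ or $b=1$, so it is establishing an equivalence in exactly those cases rather than re-deriving the full statement of \cite{SHEN}. Either Lemma~\ref{le11} carries tacit constraints on $(i_k),(j_k)$ and on $p$ that were not transcribed, or the reduction-to-DHM route is genuinely narrower than the lemma as written.
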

The sequences constructed in Lemma \ref{le11} can be obtained by applying Theorems \ref{le2} and \ref{off1} to the Ding-Helleseth-Martinsen binary sequences constructed in \cite{DHM}.
\begin{theorem}\label{th5} The quaternary sequences constructed by Lemma \ref{le11} are the same as those constructed by applying Theorems \ref{le2} and \ref{off1} to the Ding-Helleseth-Martinsen binary sequences constructed in \cite{DHM}.
\end{theorem}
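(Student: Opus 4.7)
The plan is to work through the Ding--Helleseth--Martinsen (DHM) construction via the Chinese Remainder isomorphism $\psi:\mathbb{Z}_{2}\times\mathbb{Z}_{p}\to\mathbb{Z}_{2p}$ and then directly apply Theorems~\ref{le2} and~\ref{off1}. Recall that, for $p\equiv 5\ (\mathrm{mod}\ 8)$, a DHM binary sequence $\mathbf{s}_{0}$ of period $2p$ has support of the form
\[
D_{\mathbf{s}_{0}}=\psi(\{0\}\times A)\cup\psi(\{1\}\times B)\cup S,
\]
where $A,B$ are each a union of two of the cyclotomic classes $D_{i}^{(4,p)}$, and $S\subseteq\{0,p\}$ is the singleton correction that balances $\mathbf{s}_{0}$. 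The classes are chosen so that the four sets $A\cap B$, $A\setminus B$, $B\setminus A$, and $\overline{A\cup B}$ form precisely the partition $\mathbb{Z}_{p}^{*}=D_{0}^{(4,p)}\cup D_{1}^{(4,p)}\cup D_{2}^{(4,p)}\cup D_{3}^{(4,p)}$ (in some order depending on which DHM variant one picks).

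The key observation is that $\psi(c,x)+p=\psi(1-c,x)$ in $\mathbb{Z}_{2p}$, so shifting a subset by $N/2=p$ toggles the first coordinate of its $\psi$-preimage. Hence
\[
D_{\mathbf{s}_{0}}-p=\psi(\{1\}\times A)\cup\psi(\{0\}\times B)\cup (S+p),
\]
and a symmetric expression holds for $\overline{D_{\mathbf{s}_{0}}}-p$. Using this, for either choice $\mathbf{s}_{1}=L^{p}(\mathbf{s}_{0})$ or $\mathbf{s}_{1}=L^{p}(\overline{\mathbf{s}}_{0})$ of Theorem~\ref{le2}, I would compute the four defining sets $H'_{k}=u^{-1}(k)$ as intersections of $D_{\mathbf{s}_{0}}$ (or its complement) with $D_{\mathbf{s}_{0}}-p$ (or its complement). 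For instance, with $\mathbf{s}_{1}=L^{p}(\mathbf{s}_{0})$ the set $H'_{2}$ equals $D_{\mathbf{s}_{0}}\cap(D_{\mathbf{s}_{0}}-p)$, which by the CRT splits as $\psi(\{0\}\times(A\cap B))\cup\psi(\{1\}\times(A\cap B))$ up to the $S$-correction, and similarly for $H'_{0},H'_{1},H'_{3}$.

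Because $A\cap B$, $A\setminus B$, $B\setminus A$, and $\overline{A\cup B}$ are single cyclotomic classes, each $H'_{k}$ then takes the form $\psi(\{0\}\times D_{i_{k}}^{(4,p)})\cup\psi(\{1\}\times D_{j_{k}}^{(4,p)})$ with $(i_{0},i_{1},i_{2},i_{3})$ and $(j_{0},j_{1},j_{2},j_{3})$ permutations of $(0,1,2,3)$ satisfying $i_{k}\neq j_{k}$, which is exactly the structure of the $C_{k}$ in Lemma~\ref{le11}. The main obstacle will be the bookkeeping: there are two DHM variants (namely $S=\{0\}$ versus $S=\{p\}$, corresponding to which one of $\psi(0,0)$, $\psi(1,0)$ is added for balance) and two modifications coming from Theorem~\ref{le2}, and one must verify that in each combined case the singletons land exactly in $H_{0}$ and $H_{2}$ as required by Lemma~\ref{le11}, and that the resulting quadruples $(i_{k},j_{k})$ coincide with those allowed in the statement. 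Working through the four combinations case by case, and tabulating the resulting indices against the list in \cite{SHEN}, will then give the claimed identification.
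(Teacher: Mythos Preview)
Your approach is essentially the paper's: reduce to verifying four set identities in $\mathbb{Z}_{2p}$ via the CRT isomorphism $\psi$, using that translation by $p$ toggles the first coordinate. The paper's proof is terser; it simply records the target equalities
\[
H_{0}=\psi(\overline{D})\cap(\psi(D)-p),\quad H_{1}=\psi(\overline{D})\cap(\psi(\overline{D})-p),\quad H_{2}=\psi(D)\cap(\psi(\overline{D})-p),\quad H_{3}=\psi(D)\cap(\psi(D)-p),
\]
and lists the admissible triples $(i,j,l)$ parametrising the DHM set $D$ (four triples when $a=1$, four when $b=1$) for which these are to be checked, leaving the routine verification to the reader. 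Your CRT description and the toggling observation are exactly the mechanism that makes this verification immediate.

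One point worth tightening before you carry out the casework: your illustrative computation uses $\mathbf{s}_{1}=L^{p}(\mathbf{s}_{0})$, and as you note this gives $H'_{2}=\psi(\{0\}\times(A\cap B))\cup\psi(\{1\}\times(A\cap B))$, i.e.\ forces $i_{k}=j_{k}$ for every $k$; more importantly, it places the singletons $0$ and $p$ in $H'_{3}$ and $H'_{1}$ rather than in $H_{0}$ and $H_{2}$ as Lemma~\ref{le11} requires. The paper makes exactly this observation in the Remark immediately following the theorem: only the complemented choice $\mathbf{s}_{1}=L^{p}(\overline{\mathbf{s}}_{0})$ reproduces the Shen et al.\ sequences as stated. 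So of your ``four combinations'' only those using the complemented shift are relevant, and your bookkeeping simplifies accordingly.
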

\begin{proof} We know that $p=a^{2}+4b^{2}$ for some $a$ and $b$ with $x\equiv \pm1$ $($mod $4)$. Let $D$ be defined as in \cite[Theorem 5.11]{CUN}, and let $H_{k},k=0,1,2,3$, be defined as in Lemma \ref{le11}. To see the equivalence, one needs only check the following equalities as $(i,j,l)$ runs over $\{(0,1,3),(0,2,3),(1,2,0),(1,3,0)\}$ when $a=1$, or as $(i,j,l)$ runs over $\{(0,1,2),(0,3,2),(1,0,3),(1,2,3)\}$ when $b=1$:\begin{equation}\label{eq69}
H_{0}=\psi(\overline{D})\cap(\psi(D)-p),H_{1}=\psi(\overline{D})\cap(\psi(\overline{D})-p),H_{2}=\psi(D)\cap(\psi(\overline{D})-p)\text{ and } H_{3}=\psi(D)\cap(\psi(D)-p).\end{equation}This concludes the proof.
\end{proof}
\begin{remark} Note that the equivalence shown in Theorem \ref{th5} can only be realized with the small modification to the construction given in \cite{CHUNG} pointed out in Theorem \ref{le2}. It is not difficult to check that the equalities in (\ref{eq69}) do not hold when one uses the construction as given in \cite{CHUNG}.
\end{remark}
\begin{example} Let $p=5$, and $(i,j,l)=(0,1,2)$. Then \[D=\left[\{0\}\times(D_{0}^{(4,p)}\cup D_{1}^{(4,p)})\right]\cup\left[\{1\}\times(D_{1}^{(4,p)}\cup D_{2}^{(4,p)})\right]\cup\{(0,0)\},\] and we have $\mathbf{s}_{D}=1010001101...$ and $\mathbf{u}=\phi^{-1}(\mathbf{s}_{D},L^{p}(\overline{\mathbf{s}}_{D}))=2031002312...$.
\end{example}
\subsection{Linear Complexity Over $\mathbb{F}_{2^{2}}$ of Quaternary Sequences Discussed in this Section}\label{ssec2}
 In this section we will view a polynomial in $\mathbb{F}_{2}\left[x\right]$ as a polynomial in $\mathbb{F}_{2^{2}}\left[x\right]$ in the sense that $\mathbb{F}_{2^{2}}\left[x\right]=(\mathbb{F}_{2}\mu+\mathbb{F}_{2})\left[x\right]$, where $\mu$ satisfies the relation $\mu^{2}+\mu+1=0$. Recall from Section \ref{ssec2.2} that the minimal polynomial $P_{\mathbf{s}}(x)$ and the linear complexity $L(\mathbf{s})$ of a sequence $\mathbf{s}$ over $\mathbb{F}_{2^{m}}$ are given by (\ref{eqn4}) and (\ref{eqn5}) respectively. For two binary sequences $\mathbf{s}_{1}$ and $\mathbf{s}_{2}$ of period $l$, (by using the inverse Gray-map) we can obtain a sequence $\mathbf{v}$, defined by $v(t)=s_{1}(t)\mu +s_{2}(t)$, over $\mathbb{F}_{2^{2}} \ (=\mathbb{F}_{2}\mu+\mathbb{F}_{2}$). Let \[
\mathbf{s}_{1}(x)=s_{1}(0)+s_{1}(1)x+\cdots+s_{1}(l-1)x^{l-1}\text{ and }\mathbf{s}_{2}(x)=s_{2}(0)+s_{2}(1)x+\cdots+s_{2}(l-1)x^{l-1}\]be the sequence polynomials in $\mathbb{F}_{2}\left[x\right]$ of $\mathbf{s}_{1}$ and $\mathbf{s}_{2}$, respectively. Then the sequence polynomial $\mathbf{v}(x)\in\mathbb{F}_{2^{2}}\left[x\right]$ of $\mathbf{v}$ is given by \begin{equation}\label{eqn1}
\mathbf{s}_{1}(x)\mu+\mathbf{s}_{2}(x).\end{equation}
The following lemma allows us to treat the minimal polynomial of certain pairs of binary sequences.

\begin{lemma}\label{le13} {\rm \cite{LN} \cite{UI}} Suppose that $P_{\mathbf{s}}(x)$ resp. $P_{\mathbf{v}}(x)$ are the minimal polynomials of the binary sequences of period $N$, $\mathbf{s}$ resp. $\mathbf{v}$. If $\mathbf{s}$ can be obtained from $\mathbf{v}$ by a cyclic shift by $\delta$, then $P_{\mathbf{s}}(x)=P_{\mathbf{v}}(x)$. If the sequence $\mathbf{v}$ is the complement of $\mathbf{s}$ then \[
P_{\mathbf{v}}(x)=\begin{cases} P_{\mathbf{s}}(x)(x-1),& \text{ if } (x-1) \nmid P_{\mathbf{s}}(x),\\
                       P_{\mathbf{s}}(x)/(x-1),& \text{ if } (x-1) \mid P_{\mathbf{s}}(x)\text{ but } (x-1)^{2} \nmid P_{\mathbf{s}}(x),\\
                       P_{\mathbf{s}}(x),& \text{ if } (x-1)^{2} \mid P_{\mathbf{s}}(x).\end{cases}\]
\end{lemma}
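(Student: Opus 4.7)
The plan is to work entirely in the quotient ring $\mathbb{F}_{2}[x]/(x^{N}-1)$ and exploit the formula $L(\mathbf{s}) = N - \deg\gcd(x^{N}-1,\mathbf{s}(x))$ together with a clean factorization of $x^{N}-1$ into its $(x-1)$-part and the rest.

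\medskip

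\textbf{Cyclic shift invariance.} First I would dispose of the easy claim. If $\mathbf{s}$ is the $\delta$-shift of $\mathbf{v}$, then $\mathbf{s}(x) \equiv x^{N-\delta}\mathbf{v}(x) \pmod{x^{N}-1}$. Since $x$ is a unit modulo $x^{N}-1$ (it is a root of $x\cdot x^{N-1}\equiv 1$), multiplication by $x^{N-\delta}$ is an automorphism of the quotient, hence $\gcd(x^{N}-1,\mathbf{s}(x))=\gcd(x^{N}-1,\mathbf{v}(x))$, so $P_{\mathbf{s}}(x)=P_{\mathbf{v}}(x)$ by (\ref{eqn4}).

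\medskip

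\textbf{Complement case: the key identity.} For the complement part the crux is the identity
\[
\mathbf{v}(x) \;=\; \mathbf{s}(x)+Q(x), \qquad Q(x):=\tfrac{x^{N}-1}{x-1}=1+x+\cdots+x^{N-1},
\]
which holds in $\mathbb{F}_{2}[x]$ because each coefficient of $\mathbf{v}$ is $1-s(t)=s(t)+1$. Factor $x^{N}-1=(x-1)^{m}r(x)$ with $\gcd(x-1,r(x))=1$, so that $Q(x)=(x-1)^{m-1}r(x)$. Since $r(x)\mid Q(x)$, the $r(x)$-part of $\gcd(x^{N}-1,\mathbf{v}(x))$ equals the $r(x)$-part of $\gcd(x^{N}-1,\mathbf{s}(x))$. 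Thus the entire difference between $P_{\mathbf{s}}$ and $P_{\mathbf{v}}$ lives in the $(x-1)$-valuation, and the job reduces to tracking $\nu_{x-1}(\mathbf{v})$ versus $\nu_{x-1}(\mathbf{s})$.

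\medskip

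\textbf{Case analysis on $\nu_{x-1}(P_{\mathbf{s}})$.} Using $\nu_{x-1}(P_{\mathbf{s}})=m-\nu_{x-1}(\gcd(x^{N}-1,\mathbf{s}))$, the three regimes correspond to $\nu_{x-1}(\mathbf{s})=m,\; m-1,\; \leq m-2$ respectively.
\begin{itemize}
\item If $\nu_{x-1}(\mathbf{s})=m$, then $\nu_{x-1}(\mathbf{s})>\nu_{x-1}(Q)=m-1$, so $\nu_{x-1}(\mathbf{v})=m-1$, yielding $\nu_{x-1}(P_{\mathbf{v}})=1$ and $P_{\mathbf{v}}=(x-1)P_{\mathbf{s}}$.
\item If $\nu_{x-1}(\mathbf{s})=m-1$, write $\mathbf{s}=(x-1)^{m-1}\tilde{\mathbf{s}}$ and $Q=(x-1)^{m-1}r$ with $\tilde{\mathbf{s}}(1)=r(1)=1\in\mathbb{F}_{2}$; then $\tilde{\mathbf{s}}(1)+r(1)=0$, so the sum picks up one extra factor of $(x-1)$, giving $\nu_{x-1}(\mathbf{v})\geq m$ and hence $\nu_{x-1}(P_{\mathbf{v}})=0$ with $P_{\mathbf{v}}=P_{\mathbf{s}}/(x-1)$.
\item If $\nu_{x-1}(\mathbf{s})\leq m-2<\nu_{x-1}(Q)$, then $\nu_{x-1}(\mathbf{v})=\nu_{x-1}(\mathbf{s})$, the gcds are identical, and $P_{\mathbf{v}}=P_{\mathbf{s}}$.
\end{itemize}

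\medskip

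\textbf{Expected obstacle.} The only delicate point is the middle case, where both summands share the same $(x-1)$-valuation and one must verify that their leading coefficients at $x=1$ cancel in $\mathbb{F}_{2}$; this uses characteristic-two in an essential way (over larger fields the valuation could stay at $m-1$). Everything else is routine bookkeeping of gcds, and the splitting $x^{N}-1=(x-1)^{m}r(x)$ cleanly separates the $r$-part (which is preserved) from the $(x-1)$-part (which is all that changes).
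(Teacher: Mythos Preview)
Your argument is correct. Note, however, that the paper does not actually prove this lemma: it is quoted from \cite{LN} and \cite{UI} as a known result and stated without proof. Your proposal therefore supplies a self-contained argument where the paper provides none, so there is no ``paper's proof'' to compare against.

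One minor imprecision: in your first bullet you write $\nu_{x-1}(\mathbf{s})=m$, but the condition $(x-1)\nmid P_{\mathbf{s}}(x)$ is equivalent to $\nu_{x-1}(\gcd(x^{N}-1,\mathbf{s}(x)))=m$, i.e.\ $\nu_{x-1}(\mathbf{s})\geq m$ rather than exactly $m$. This does not affect the conclusion, since in either case $\nu_{x-1}(\mathbf{s})>\nu_{x-1}(Q)=m-1$ forces $\nu_{x-1}(\mathbf{v})=m-1$. The rest of the bookkeeping is clean, and your observation that the middle case genuinely relies on characteristic two (so that the two nonzero values at $x=1$ must cancel) is a worthwhile remark.
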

\begin{table}\label{ta1}
\begin{center}
\tabcolsep=0.1cm
\footnotesize
\begin{tabular}{|c|c|c|c|c|}
\hline
  Period $N$ &$R_{max}$&Complexity $L$ over $\mathbb{F}_{2^{2}}$&Construction ref&Complexity ref \\
  \hline
  $N=p^{n}-1$ & $\sqrt{8},4$&unknown&\cite{JANG}&NA\\\hline
  \shortstack{$N=2(2^{n}-1)$,\\$2p,2p(p+2)$} & $2$&unknown&\cite{LUKE}&NA\\\hline
  \shortstack{$N=4(2^{n}-1)$,\\$4p,4p(p+2)$} & $4$&unknown&\cite{LUKE}&NA\\\hline
  $N=2(2^{n}-1)$ & $2$&unknown&\cite{JANG}&NA\\\hline
  $N=2p$ & $2$&unknown&\cite{KIM1}&NA\\\hline
  $N=p^{n}-1$ & $2$&unknown&\cite{KIM}&NA\\\hline
  \shortstack{$N=2m,$\\$m\equiv3 \ (\text{mod } 4)$} & $2$ &unknown&\cite{TD}&NA\\\hline
  \shortstack{$N=2p,$\\$p\equiv1 \ (\text{mod } 8)$}& $4$&$L=2p$&\cite{EDEM}&\cite{EDEM}\\\hline
  \shortstack{$N=2p,$\\$p\equiv5 \ (\text{mod } 8)$} & $\sqrt{8}$&$L=\frac{3p+1}{2}$&\cite{EDEM}&\cite{EDEM}\\\hline
  \shortstack{$N=2p,$\\$p=4y^{2}+1,$\\$y$ even} & $2$&\shortstack{(*)Many cases, e.g., $L=2p$ when \\$(i,j)\in\{(0,1),(0,3)\}$ and $2\in D_{2}^{(4,p)}$}&\shortstack{\cite{SHEN}, \\Theorem \ref{th5}}&\shortstack{\cite{ZHA} and\\ Remark \ref{re0}}\\\hline
  \shortstack{$N=2p,$\\$p=4+x^{2},$\\$x$ odd} & $2$&$L=2p$&\shortstack{\cite{SHEN}}&\shortstack{\cite{ZHA} and\\ Remark \ref{re0}}\\\hline
  $N=p^{n}-1$ & $2$&unknown&\shortstack{\cite{CHUNG} and apply Theorems \ref{le2} and \\\ref{off1} to binary Sidelnikov sequences}&NA\\\hline
  $N=p^{n}-1$ & $4$&unknown&\shortstack{\cite{CHUNG} and apply Theorems \ref{le2} and \\\ref{off1} to binary Sidelnikov sequences}&NA\\\hline
  \multicolumn{3}{l}{{\scriptsize Note: $p$ an odd prime; m,n,x,y positive integers.}}\\
  \multicolumn{3}{l}{{\scriptsize Note (*): Indices $(i,j)$ defined as in \cite[Theorem 5.11]{CUN}.}}\\
\end{tabular}
\caption{{\scriptsize Comparison of Balanced Quaternary Sequences of Even Period with Low Autocorrelation}}
\end{center}
\end{table}
\begin{theorem}\label{th6} Let $\mathbf{s}$ be a binary sequence with even period $N$ and minimal polynomial $P_{\mathbf{s}}(x)\in\mathbb{F}_{2}\left[x\right]$. If $\mathbf{u}$ is the quaternary sequence defined by $\mathbf{u}=\phi^{-1}(\mathbf{s},L^{\frac{N}{2}}(\mathbf{s}))$ then $P_{\mathbf{u}}(x)=P_{\mathbf{s}}(x)\in\mathbb{F}_{2^{2}}\left[x\right]$. If $\mathbf{u}$ is the quaternary sequence defined by $\mathbf{u}=\phi^{-1}(\mathbf{s},L^{\frac{N}{2}}(\overline{\mathbf{s}}))$ then $P_{\mathbf{u}}(x)=P_{\mathbf{s}}(x)\in\mathbb{F}_{2^{2}}\left[x\right]$ if $(x-1)^{2} \mid P_{\mathbf{s}}(x)$.
\end{theorem}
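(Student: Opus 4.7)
The plan is to compute $\gcd(x^N - 1, \mathbf{u}(x))$ in $\mathbb{F}_{2^2}[x]$ directly, after expressing $\mathbf{u}(x)$ via (\ref{eqn1}) in closed form in terms of $\mathbf{s}(x)$. Using the standard identity $L^{N/2}(\mathbf{s})(x) \equiv x^{N/2}\mathbf{s}(x) \pmod{x^N - 1}$, Case~1 immediately gives $\mathbf{u}(x) \equiv \mathbf{s}(x)(\mu + x^{N/2}) \pmod{x^N - 1}$. For Case~2, writing $\overline{\mathbf{s}}(x) = T(x) + \mathbf{s}(x)$ with $T(x) = 1 + x + \cdots + x^{N-1} = (x^N - 1)/(x - 1)$, and using the easily verified identity $x^{N/2}T(x) \equiv T(x) \pmod{x^N - 1}$, I obtain $\mathbf{u}(x) \equiv \mathbf{s}(x)(\mu + x^{N/2}) + T(x) \pmod{x^N - 1}$.

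The key auxiliary fact is that $\gcd(x^N - 1, \mu + x^{N/2}) = 1$ in $\mathbb{F}_{2^2}[x]$: any common root $\alpha$ in the algebraic closure would satisfy $\alpha^N = 1$, forcing $(\alpha^{N/2})^2 = 1$ and hence $\alpha^{N/2} = 1$ in characteristic $2$, while simultaneously $\alpha^{N/2} = \mu \neq 1$, a contradiction. Case~1 follows at once: $\gcd(x^N - 1, \mathbf{u}(x)) = \gcd(x^N - 1, \mathbf{s}(x)(\mu + x^{N/2})) = \gcd(x^N - 1, \mathbf{s}(x))$, so $P_{\mathbf{u}}(x) = P_{\mathbf{s}}(x)$.

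The main obstacle is Case~2, where I must control the additive term $T(x)$, particularly at the repeated factor $(x - 1)^{2^a}$ of $x^N - 1$ (writing $N = 2^a M$ with $M$ odd), where cancellation between $\mathbf{s}(x)(\mu + x^{N/2})$ and $T(x)$ could a priori occur. My plan is to factor $x^N - 1 = (x^M - 1)^{2^a} = (x - 1)^{2^a} R(x)$ with $\gcd(x - 1, R(x)) = 1$, and to observe that correspondingly $T(x) = (x - 1)^{2^a - 1} R(x)$. Since $R \mid T$, reduction modulo $R$ collapses Case~2 to the Case~1 analysis and gives $\gcd(R(x), \mathbf{u}(x)) = \gcd(R(x), \mathbf{s}(x))$. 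For the complementary $(x - 1)^{2^a}$-part, since $(\mu + x^{N/2})|_{x = 1} = \mu + 1 \neq 0$, I have $v_{x-1}(\mathbf{s}(x)(\mu + x^{N/2})) = v_{x-1}(\mathbf{s})$, whereas $v_{x-1}(T) = 2^a - 1$. The hypothesis $(x - 1)^2 \mid P_{\mathbf{s}}(x)$ is equivalent to $v_{x-1}(\mathbf{s}) \leq 2^a - 2$, which is strictly smaller than $v_{x-1}(T)$, so the two summands have distinct $(x - 1)$-adic valuations, no cancellation occurs, and $v_{x-1}(\mathbf{u}) = v_{x-1}(\mathbf{s})$. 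Combining both parts yields $\gcd(x^N - 1, \mathbf{u}(x)) = \gcd(x^N - 1, \mathbf{s}(x))$, and therefore $P_{\mathbf{u}}(x) = P_{\mathbf{s}}(x)$.
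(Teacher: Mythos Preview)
Your proof is correct. For Case~1 it coincides with the paper's argument: the paper also writes $\gcd(x^{N}-1,\mathbf{u}(x))=\gcd(x^{N}-1,\mathbf{s}(x)(x^{N/2}+\mu))$ and says ``the conclusion easily follows''; you simply make explicit the reason, namely that $x^{N/2}+\mu$ and $x^{N}-1$ share no root in the algebraic closure.

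For Case~2 you take a genuinely different route. The paper first invokes Lemma~\ref{le13} to obtain $\gcd(x^{N}-1,\mathbf{s}(x))=\gcd(x^{N}-1,\overline{\mathbf{s}}(x))$, factors this common divisor out of $\mathbf{u}(x)=\mathbf{s}(x)\mu+x^{N/2}\overline{\mathbf{s}}(x)$, and then shows the cofactor is coprime to $P_{\mathbf{s}}(x)$ by evaluating at each root $\beta$ of $P_{\mathbf{s}}$ (using $\beta^{N/2}=1$ to get the nonzero value $m(\beta)(\mu+1)$). You instead keep $\mathbf{u}(x)\equiv \mathbf{s}(x)(\mu+x^{N/2})+T(x)$ and split $x^{N}-1=(x-1)^{2^{a}}R(x)$: the $R$-part collapses to Case~1 because $R\mid T$, while the $(x-1)^{2^{a}}$-part is handled by the valuation inequality $v_{x-1}(\mathbf{s})\le 2^{a}-2<2^{a}-1=v_{x-1}(T)$, which is exactly the hypothesis $(x-1)^{2}\mid P_{\mathbf{s}}(x)$ rewritten. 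Your argument is more self-contained (it does not rely on Lemma~\ref{le13}) and treats the repeated factor $(x-1)^{2^{a}}$ of $x^{N}-1$ explicitly, whereas the paper's root-evaluation approach is shorter but leaves the behaviour at the repeated root $\beta=1$ to the reader.
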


\begin{proof}
We first show the case where $\mathbf{u}=\phi^{-1}(\mathbf{s},L^{\frac{N}{2}}(\overline{\mathbf{s}}))$. Since $(x-1)^{2} \mid P_{\mathbf{s}}(x)$, by Lemma \ref{le13} we have that $\gcd(x^{N}-1,\mathbf{s}(x))=\gcd(x^{N}-1,\overline{\mathbf{s}}(x))$. Let $m(x)=\mathbf{s}(x)/\gcd(x^{n}-1,\mathbf{s}(x))$. Since $\overline{\mathbf{s}}(x)=\mathbf{s}(x)+(x^{N}-1)/(x-1)$ we have \[
\frac{\overline{\mathbf{s}}(x)}{\gcd(x^{N}-1,\overline{\mathbf{s}}(x))}=\frac{\overline{\mathbf{s}}(x)}{\gcd(x^{N}-1,\mathbf{s}(x))}
=\frac{\mathbf{s}(x)+(x^{N}-1)/(x-1)}{\gcd(x^{N}-1,\mathbf{s}(x))}=m(x)+\frac{P_{\mathbf{s}}(x)}{(x-1)}.\]

Using the notation introduced in (\ref{eqn1}), we have
\begin{eqnarray*} \gcd(x^{n}-1,\mathbf{u}(x)) & = & \gcd(x^{N}-1,\mathbf{s}(x)\mu+x^{\frac{N}{2}}\overline{\mathbf{s}}(x))\\
& = & \gcd(x^{N}-1,\gcd(x^{N}-1,\mathbf{s}(x))m(x)\mu+x^{\frac{N}{2}}\gcd(x^{N}-1,\mathbf{s}(x))(m(x)+\frac{P_{\mathbf{s}}(x)}{(x-1)}))\\
                                             & = & \gcd(x^{N}-1,\mathbf{s}(x))\gcd(P_{\mathbf{s}}(x),m(x)(\mu+x^{\frac{N}{2}})+x^{\frac{N}{2}}\frac{P_{\mathbf{s}}(x)}{(x-1)}) \\
                                             & = & \gcd(x^{N}-1,\mathbf{s}(x)),\end{eqnarray*} where the last equality holds due to the fact that $\gcd(P_{\mathbf{s}}(x),m(x))=1$, whence if $P_{\mathbf{s}}(\beta)=0$ for some $N$-th root of unity $\beta\neq1$ over $\mathbb{F}_{2^{2}}$, then we have $m(\beta)(\mu+\beta^{\frac{N}{2}})+\beta^{\frac{N}{2}}\frac{P_{\mathbf{s}}(\beta)}{(\beta-1)}=m(\beta)(\mu+1)\neq0$.
                                             \par
                                             When $\mathbf{u}=\phi^{-1}(\mathbf{s},L^{\frac{N}{2}}(\mathbf{s}))$, we have $\gcd(x^{n}-1,\mathbf{u}(x))=\gcd(x^{N}-1,\mathbf{s}(x)(x^{\frac{N}{2}}+\mu))$. The conclusion easily follows.
\end{proof}

\begin{remark}\label{re0}
.\newline\begin{enumerate}[(i)]
\item Four important, known classes of binary sequences with ideal autocorrelation are {\it Paley Class}, {\it Twin Prime Class}, {\it Hall Class}, and {\it Singer Class} \ (for a complete survey on binary sequences with ideal autocorrelation, e.g. see \cite{CAI}, \cite{GOL}); all of which can be used as base sequences to construct binary sequences of period $4l$ with optimal autocorrelation using the method shown in \cite{TD}. The complexity of these binary sequences was computed by Wang and Du in \cite{WANG} for the cases where $A$ is a cyclic shift of $B$ (where $A$ and $B$ are defined as in \cite[Theorem 5.16]{CUN}), and for each of the four classes, it was shown that $(x-1)^{2}$ is a divisor of the minimal polynomial. Then by Theorem \ref{th6} we have that if $D$ is defined as in \cite[Theorem 5.16]{CUN}, and $\mathbf{u}=\phi^{-1}(\mathbf{s},L^{\frac{N}{2}}(\overline{\mathbf{s}}))$ where $\mathbf{s}=\mathbf{s}_{\psi(D)}$ or $\overline{\mathbf{s}}_{\psi(D)}$, then $P_{\mathbf{u}}(x)=P_{\mathbf{s}}(x)$.
\item The linear complexity of the Ding-Helleseth-Martinsen binary sequences of period $2p$ can be found in \cite{ZHA}, where it is shown that $(x-1)^{2}$ is a divisor of the minimal polynomal. Then by Theorem \ref{th6} we have that if $D$ is defined as in \cite[Theorem 5.11]{CUN}, and $\mathbf{u}=\phi^{-1}(\mathbf{s},L^{\frac{N}{2}}(\overline{\mathbf{s}}))$ where $\mathbf{s}=\mathbf{s}_{\psi(D)}$ or $\overline{\mathbf{s}}_{\psi(D)}$, then $P_{\mathbf{u}}(x)=P_{\mathbf{s}}(x)$.\end{enumerate}
    \end{remark}

\section{Concluding Remarks}\label{sec5}
We have studied quaternary sequences of both even and odd period having low autocorrelation. We have constructed new families of balanced quaternary sequences of odd period and low autocorrelation using cyclotomic classes of order eight, as well as investigate the linear complexity of some known quaternary sequences of odd period. We have also constructed new families of balanced and almost balanced quaternary sequences of even period and low autocorrelation, and investigated their linear complexity as well.
\bibliographystyle{plain}

\section{Appendix}
For convenience, we denote the cyclotomic classes $D_{i}^{(4,p)}$ of order four modulo a prime $p$, simply by $D_{i}$. We will need the following lemma.
\begin{lemma}\label{le14} {\rm \cite{STO}} The five distinct cyclotomic numbers modulo $p$ of order four for odd $f$ are
\begin{eqnarray*}
(0,0) & = & (2,2) = (2,0) = \frac{p-7+2a}{16} \ (=A),    \\
(0,1) & = & (1,3) = (3,2) = \frac{p+1+2a-8b}{16} \ (=B), \\
(1,2) & = & (0,3) = (3,1) = \frac{p+1+2a+8b}{16} \ (=D), \\
(0,2) & = & \frac{p+1-6a}{16} \ (=C),                    \\
\text{all others} & = & \frac{p-3-2a}{16} \ (=E),
\end{eqnarray*} and those for even $f$ are
\begin{eqnarray*}
(0,0) & = & \frac{p-11-6a}{16} \ (=A),    \\
(0,1) & = & (1,0) = (3,3) = \frac{p-3+2a+8b}{16} \ (=B), \\
(0,2) & = & (2,0) = (2,2) = \frac{p-3+2a}{16} \ (=C), \\
(0,3) & = & (3,0) = (1,1) = \frac{p-3+2a-8b}{16} \ (=D), \\
\text{all others} & = & \frac{p+1-2a}{16} \ (=E).
\end{eqnarray*}
\end{lemma}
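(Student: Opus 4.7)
The plan is to read off $\mathbf{u}(x)$ via the template $\mathbf{v}(x)=\mathbf{s}_1(x)\mu+\mathbf{s}_2(x)$ from (\ref{eqn1}) together with the cyclic-shift identity $L^{N/2}(\mathbf{s})(x)\equiv x^{N/2}\mathbf{s}(x)\pmod{x^N-1}$, and then to compute $\gcd(x^N-1,\mathbf{u}(x))$ explicitly in $\mathbb{F}_{2^{2}}[x]$. The one algebraic fact driving the whole argument is a characteristic-two observation: every $N$-th root of unity $\beta$ over $\mathbb{F}_{2^{2}}$ satisfies $\beta^{N/2}=1$, because $(\beta^{N/2})^{2}=\beta^{N}=1$ and $x^{2}-1=(x-1)^{2}$. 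Consequently $\mu+\beta^{N/2}=\mu+1\neq 0$ for every such $\beta$, so $\gcd(x^{N}-1,\mu+x^{N/2})=1$.

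For the first assertion ($\mathbf{u}=\phi^{-1}(\mathbf{s},L^{N/2}(\mathbf{s}))$), this reduces to one line: $\mathbf{u}(x)\equiv\mathbf{s}(x)(\mu+x^{N/2})\pmod{x^{N}-1}$, so the coprimality just noted gives $\gcd(x^{N}-1,\mathbf{u}(x))=\gcd(x^{N}-1,\mathbf{s}(x))$, whence $P_{\mathbf{u}}(x)=P_{\mathbf{s}}(x)$ via (\ref{eqn4}).

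For the second assertion ($\mathbf{u}=\phi^{-1}(\mathbf{s},L^{N/2}(\overline{\mathbf{s}}))$), I would use $\overline{\mathbf{s}}(x)=\mathbf{s}(x)+(x^{N}-1)/(x-1)$ to write
\[\mathbf{u}(x)=\mathbf{s}(x)(\mu+x^{N/2})+x^{N/2}\frac{x^{N}-1}{x-1}.\]
Set $g(x)=\gcd(x^{N}-1,\mathbf{s}(x))$ and $m(x)=\mathbf{s}(x)/g(x)$, so that $\gcd(P_{\mathbf{s}}(x),m(x))=1$. Under the hypothesis $(x-1)^{2}\mid P_{\mathbf{s}}(x)$, Lemma \ref{le13} gives $\gcd(x^{N}-1,\overline{\mathbf{s}}(x))=g(x)$, and $P_{\mathbf{s}}(x)/(x-1)$ is a genuine polynomial. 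Factoring $g(x)$ out (using $\gcd(gA,gB)=g\gcd(A,B)$) reduces the task to showing that
\[Q(x):=m(x)(\mu+x^{N/2})+x^{N/2}\frac{P_{\mathbf{s}}(x)}{x-1}\]
is coprime to $P_{\mathbf{s}}(x)$. For any root $\beta\neq 1$ of $P_{\mathbf{s}}(x)$ in the algebraic closure, $\beta$ is an $N$-th root of unity, so $\beta^{N/2}=1$ and the last term vanishes, yielding $Q(\beta)=m(\beta)(\mu+1)\neq 0$ since $\gcd(P_{\mathbf{s}},m)=1$ forces $m(\beta)\neq 0$.

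The main obstacle is the potential root $\beta=1$, and this is exactly why the hypothesis $(x-1)^{2}\mid P_{\mathbf{s}}(x)$ is imposed. With that hypothesis, $P_{\mathbf{s}}(x)/(x-1)$ still carries a factor $(x-1)$ and hence vanishes at $1$, leaving $Q(1)=m(1)(\mu+1)\neq 0$ (coprimality of $P_{\mathbf{s}}$ and $m$ gives $m(1)\neq 0$ as soon as $(x-1)\mid P_{\mathbf{s}}$). Were $(x-1)$ a simple factor of $P_{\mathbf{s}}(x)$, the quotient would evaluate to a nonzero constant at $1$, $Q(1)$ could then cancel, and a residual factor $(x-1)$ would survive in the gcd and strictly shrink $P_{\mathbf{u}}$ below $P_{\mathbf{s}}$. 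Thus the full content of the hypothesis is concentrated in this single evaluation at $\beta=1$, and verifying it completes the proof.
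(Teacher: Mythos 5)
Your proposal does not address the statement at all. The statement to be proved is Lemma \ref{le14}: the explicit evaluation of the cyclotomic numbers $(i,j)_{4}$ of order four modulo a prime $p=4f+1=a^{2}+4b^{2}$, $a\equiv 1\ (\mathrm{mod}\ 4)$, split into the cases $f$ odd and $f$ even. What you have written is instead a (reasonable-looking) proof of Theorem \ref{th6}, concerning the minimal polynomials over $\mathbb{F}_{2^{2}}$ of the quaternary sequences $\phi^{-1}(\mathbf{s},L^{N/2}(\mathbf{s}))$ and $\phi^{-1}(\mathbf{s},L^{N/2}(\overline{\mathbf{s}}))$. Nothing in your argument touches cyclotomic classes, the counts $|D_{i}^{(4,p)}\cap(D_{j}^{(4,p)}+1)|$, or the quadratic-form parameters $a$ and $b$, so it cannot be repaired into a proof of Lemma \ref{le14}; it is simply a proof of a different result.

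For the record, the paper itself offers no proof of Lemma \ref{le14}: it is quoted from Storer's \emph{Cyclotomy and Difference Sets} \cite{STO}. A genuine proof would proceed along classical cyclotomy lines: express the counts $(i,j)_{4}$ via the Gaussian periods or Jacobi sums $J(\chi^{i},\chi^{j})$ for a quartic character $\chi$, use the evaluation $J(\chi,\chi)=a+2bi$ with $p=a^{2}+4b^{2}$ together with the symmetry relations (\ref{eq7}) (which differ according to the parity of $f$, i.e., according to whether $-1$ is a quartic residue), and solve the resulting linear system for the five distinct values $A,B,C,D,E$. If you intend your argument as a proof of Theorem \ref{th6}, it matches the paper's proof of that theorem in both structure and the key observation that $\beta^{N/2}=1$ forces $\mu+\beta^{N/2}=\mu+1\neq 0$; but as a proof of the stated lemma it is vacuous.
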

Here we give the proof of Lemma \ref{th67}.
\begin{proof} Let $h\in\{0,1,2,3\}\setminus \{i,j,l\}$. The balancedness comes from the simple fact that $\overline{C}_{0}\cap\overline{C}_{1}=D_{h}$, $\overline{C}_{0}\cap C_{1}=D_{l}\cup\{0\}$, $C_{0}\cap C_{1}=D_{j}$ and $C_{0}\cap\overline{C}_{1}=D_{i}$. By Lemma \ref{le1} we have \[
R_{\mathbf{u}}(\tau)=\frac{1}{2}\left[R_{\mathbf{s}_{C_{0}}}(\tau)+R_{\mathbf{s}_{C_{1}}}(\tau)\right]+\frac{\omega}{2}\left[R_{\mathbf{s}_{C_{0}},\mathbf{s}_{C_{1}}}(\tau)
+R_{\mathbf{s}_{C_{1}},\mathbf{s}_{C_{0}}}(\tau)\right].\] We show the case $(i,j,l)=(1,2,3)$. The other cases are almost identical. First notice that, by Lemma \ref{le14}, when $f$ is even resp. odd, the number $n_{0,1}(k)$ of $t$ such that $s_{C_{0}}(t)=s_{C_{1}}(t+\tau)$ for $\tau^{-1}\in D_{k}$ is \begin{equation*}\label{eqn2}\begin{cases}
4E+A+B+C+D+1,&\text{ if }k=0,\\
2(B+C+D+E),&\text{ if }k=1,\\
4E+A+B+C+D+1,&\text{ if }k=2,\\
2(B+C+D+E)+2,&\text{ if }k=3,\end{cases}\text{ resp. }\begin{cases}
2(A+B+D+E)+2,&\text{ if }k=0,\\
4E+A+B+C+D+1,&\text{ if }k=1,\\
2(A+B+D+E),&\text{ if }k=2,\\
4E+A+B+C+D+1,&\text{ if }k=3.\end{cases}\end{equation*}
The numbers $n_{1,0}(k)$, for $k=0,1,2,3$, can be calculated in the same way. When $f$ is even, we have $n_{0,1}(k)=n_{1,0}(k)$ for all $k$. When $f$ is odd, $n_{0,1}(k)=n_{1,0}(k)$ when $k=1$ or $3$, and $n_{0,1}(k)=n_{1,0}(k)-2$ if $k=0$, and $n_{0,1}(k)=n_{1,0}(k)+2$ if $k=2$. We also have, by Lemma \ref{le14}, when $f$ is even resp. odd, the number $n_{0,0}(k)$ of $t$ such that $s_{C_{0}}(t)=s_{C_{0}}(t+\tau)$ for $\tau^{-1}\in D_{k}$ is \begin{equation*}\label{eqn3}\begin{cases}
2E+3D+A+B+C+2,&\text{ if }k=0,\\
2E+3B+A+C+D+2,&\text{ if }k=1,\\
2E+3D+A+B+C,&\text{ if }k=2,\\
2E+3B+A+C+D,&\text{ if }k=3,\end{cases}\text{ resp. }\begin{cases}
4E+2(A+D)+1,&\text{ if }k=0,\\
4E+2(A+B)+1,&\text{ if }k=1,\\
4E+2(A+D)+1,&\text{ if }k=2,\\
4E+2(A+B)+1,&\text{ if }k=3.\end{cases}\end{equation*} The numbers $n_{1,0}(k)$ can be calculated in the same way. When $f$ even, we have $n_{0,0}(k)=n_{1,1}(k)$ when $k=1$ or $3$, and $n_{0,0}(k)=n_{1,1}(k)+2$ when $k=0$ or $2$. When $f$ odd, we have $n_{0,0}(k)=n_{1,1}(k)$ for all $k$. The result follows.
\end{proof}
\end{document}